\crefname{section}{Section}{Sections}
\crefname{subsection}{Section}{Sections}
\crefname{equation}{}{}
\theoremstyle{plain}
\newtheorem{theorem}{Theorem}[section]
\newtheorem{lemma}[theorem]{Lemma}
\newtheorem{definition}[theorem]{Definition}
\newtheorem{remark}[theorem]{Remark}
\pgfplotsset{compat=1.4,
	     width=5.9cm,
	     height=5.9cm
	    }
\newcommand{\ubar}[1]{\underline{#1\mkern-2mu}\mkern2mu }
\newcommand{\Snap}{\mathcal{S}}
\newcommand{\AccSnap}{\widetilde{\mathcal{S}}}
\newcommand{\ProjSnap}{\widetilde{\mathcal{R}}}
\newcommand{\SnapMap}{\ubar{\mathcal{S}}}
\newcommand{\AccSnapMap}{\widetilde{\ubar{\mathcal{S}}}}
\newcommand{\ProjSnapMap}{\widetilde{\ubar{\mathcal{R}}}}
\newcommand{\POD}{\operatorname{POD}}
\newcommand{\MPOD}{\overline{\operatorname{POD}}}
\newcommand{\HAPOD}{\operatorname{HAPOD}}
\newcommand{\HAPODSTDE}[1][\T]{\HAPOD[\Snap, {#1}, D, \varepsilon]}
\newcommand{\Span}{\operatorname{span}}
\newcommand{\T}{\mathcal{T}}
\newcommand{\Tr}{\operatorname{tr}}
\newcommand{\children}[2][\T]{\mathcal{C}_{#1}(#2)}
\newcommand{\childmap}[1][\T]{\mathcal{C}_{#1}}
\newcommand{\R}{\mathbb{R}}
\title{Hierarchical Approximate Proper Orthogonal Decomposition%
\thanks{Supported by the Deutsche Forschungsgemeinschaft, DFG EXC 1003 Cells in Motion - Cluster of Excellence, M\"unster, Germany,
by the Center for Developing Mathematics in Interaction, DEMAIN, M\"unster, Germany,
by Cells in Motion (CiM) Cluster of Excellence in flexible funds project FF-2015-07,
<<<<<<< HEAD
by the German Federal Ministry of Education and Research (BMBF) under contract number 05M13PMA
and by the German Federal Ministry for Economic Affairs and
Energy (BMWi), in the joint project: ``MathEnergy -- Mathematical
Key Technologies for Evolving Energy Grids'', sub-project:
Model Order Reduction (Grant number: 0324019\textbf{B}).
}}
\author{
Christian Himpe\thanks{Contact: \href{mailto:himpe@mpi-magdeburg.mpg.de}{\nolinkurl{himpe@mpi-magdeburg.mpg.de}},  
Computational Methods in Systems and Control Theory Group at the Max Planck Institute for Dynamics of Complex Technical Systems, Sandtorstra{\ss}e~1, D-39106 Magdeburg, Germany}
\and
Tobias Leibner\thanks{Contact: \href{mailto:tobias.leibner@uni-muenster.de}{\nolinkurl{tobias.leibner@uni-muenster.de}},
Applied Mathematics, University of M\"unster, Einsteinstrasse~62, D-48149 M\"unster, Germany}
\and
Stephan Rave\thanks{Contact: \href{mailto:stephan.rave@uni-muenster.de}{\nolinkurl{stephan.rave@uni-muenster.de}},
Applied Mathematics, University of M\"unster, Einsteinstrasse~62, D-48149 M\"unster, Germany}
}
\date{June 21, 2017}
\begin{document}

\maketitle

\begin{abstract}
Proper Orthogonal Decomposition (POD) is a widely used technique for the construction of
low-dimensional approximation spaces from high-dimensional input data.
For large-scale applications and an increasing number of input data vectors, however,
computing the POD often becomes prohibitively expensive.
This work presents a general, easy-to-implement approach to compute an approximate POD
based on arbitrary tree hierarchies of worker nodes, where each worker computes a POD of
only a small number of input vectors.
The tree hierarchy can be freely adapted to optimally suit the available computational resources. 
In particular, this hierarchical approximate POD (HAPOD) allows for both simple
parallelization with low communication overhead, as well as incremental POD computation under
constrained memory capacities.
Rigorous error estimates ensure the reliability of our approach, and extensive numerical examples underline its
performance.
\end{abstract}

\section{Introduction}
The construction of low-dimensional subspaces from high-dimensional data, dynamics or operators is an essential mechanism in many applications,
with the aim to accelerate or merely enable numerical computations of large-scale models.
In the discipline of model reduction, this methodology is the central problem under investigation.

A well-known and popular approach for subspace construction is the Proper Orthogonal Decomposition (POD),
i.e.\ the computation of the left-singular vectors associated with the dominant singular values
of a given set of input column vectors concatenated to a matrix.
An important field of application for the POD is the reduction of ordinary differential equation (ODE) models \cite{moore79} and partial differential equation (PDE) models \cite{kunisch99,kunisch02}.
A landmark work in this context is the use of the POD for compression of simulation data \cite{sirovich87} where the dominant modes are extracted from flow simulation time series by the \emph{method of snapshots}.
For an elaborate review of the POD method see for example \cite{gubisch17,holmes12}.

Due to technical limitations of computational resources, such as memory-space and acceptable computational complexities, not only the evaluation of a large-scale problem,
but even the computation of a low-rank approximation by existing methods may be infeasible.
This is particularly true for the POD, as the (truncated) singular value decomposition (SVD) of large matrices
is a computationally demanding task.
In order to speed up the computation,
various parallel algorithms are available for SVD computation \cite{berry05};
more recently, partitioning approaches were developed to obtain the SVD, or an approximation thereof, such as \cite{solovyev14a,solovyev14},
\cite{constantine11,constantine14}, \cite{beattie06}, \cite{wang15}, as well as a related parallel QR decomposition in \cite{sayadi14}.
A commonality of these methods is the horizontal slicing of the argument matrix, which is similar to the partitioning of the spatial domain of a discretized PDE model.
However, such an approach is only possible when complete horizontal slices of the argument matrix are available.
This usually means that all input data vectors have to be computed and stored before starting the POD computation.
For large problems, this might be impossible due to insufficient memory or even mass storage space.
Also, for parametrized problems the input data might be distributed column-wise among several workers,
and horizontal slicing of the input would require heavy communication between the workers, which may
be impossible, for instance in grid-computing environments.

In comparison, the herein proposed Hierarchical Approximate Proper Orthogonal Decomposition (HAPOD) is based on a vertical slicing of the input matrix and is 
targeted to extend POD-based methods which were designed with ``tall and skinny'' matrices in mind towards settings
where, due to enhanced requirements such as parametrization, the actual matrix dimension is ``tall and not-so-skinny''.

Our method is based upon the simple idea of replacing subsets of input vectors by POD approximations of these, which
then form the input of additional POD steps.
As such, our algorithm can be applied on top of any pre-existing POD implementation. 
Being formulated for arbitrary tree hierarchies of workers, it allows sequential and parallel decompositions,
as well as combinations thereof, based on the partitioning of the time domain or parameter space.

The HAPOD is a \emph{single pass} method in the sense that the input vectors at a given HAPOD node are
only required for a single local POD computation and can be discarded afterwards.
Rigorous error estimates allow a priori control of the final \mbox{$\ell^2$-approximation}
error for the input data. At the same time, bounds for the number of generated HAPOD modes guarantee
quasi-optimality of the generated approximation space.
As long as the final depth of the HAPOD tree is known, local PODs can be computed as soon
as all input data for a given node is available.
As such, the HAPOD can also be seen as a
general methodology for approximation quality control when updating POD spaces with additional input data.

Stochastic methods for SVD computation, e.g. \cite{MonteCarloSVD,halko11,QUICSVD,RandomPCA}, share many benefits with
the HAPOD.
In particular, these methods are easily parallelizable with comparable communication requirements
(at least when no power iteration is performed), and single pass formulations do exist.
However, most algorithms are designed for a prescribed fixed approximation rank.
Those which do guarantee spaces with prescribed approximation error (\cite[Section 4.4]{halko11}, \cite{QUICSVD}
or the preprint \cite{R3SVD}) are based on iterative procedures which require multiple passes
over the input data.
Also, our approach can be implemented more easily on top of already existing POD codes.

For the incremental (updated) computation of an SVD we refer to the work of Brand \cite{brand02,brand06},
which allows the update of an existing SVD given new data.
Geared towards (POD-based) model reduction, \cite{oxberry17} uses Brand's algorithm for an incremental POD algorithm.
In this context, the HAPOD framework provides 
local choices of truncation error tolerances to rigorously control the overall approximation error,
given that the maximum number of updates is known. 
A similar updated POD algorithm is employed for the experiment in \cref{sc:boltzmann}.
In \cite{baker12}, another family of rank-based approaches for incremental SVD computations is presented.

Given the simplicity of the HAPOD, we do not claim to be first in investigating this concept.
In fact, we recently became aware of \cite{Paul-Dubois-TaineAmsallem2015}, wherein special cases of
the HAPOD (i.e.\ a distributed and an incremental HAPOD in the sense of \cref{sc:special_cases}) are briefly discussed.
Balanced n-ary tree structures are investigated in \cite{iwen17}.
In both cases, error bounds for prescribed truncation ranks are derived.
Another application of the distributed HAPOD is discussed in \cite{NestedPOD}, which uses the error bound derived
in \cite{Paul-Dubois-TaineAmsallem2015}.
In the context of principal component analysis, distributed methods have been introduced \cite{macau10,qi04,qu02},
which, apart from the centering of the data set, correspond to a distributed HAPOD.
No rigorous error bounds are derived, however.

Main contribution of this work is a thorough study of the HAPOD with the aim of showing that it should be a
standard part in the toolbox of every model reduction practitioner. 
In particular, in contrast to \cite{Paul-Dubois-TaineAmsallem2015,iwen17,NestedPOD}, we formally analyze the algorithm in
a general setting with arbitrary tree topologies, making it suitable to more complex applications (cf.\ \cref{sc:boltzmann}),
and give, for prescribed local POD truncation error tolerances, estimates for both the approximation error
as well as the obtained (local and final) numbers of POD modes.
Based on these estimates we provide rules for the selection of the local error tolerances to achieve a given
global target (mean) approximation error, with a user-definable tradeoff between optimality of the generated
approximation space and computational efficiency. 
We show the performance of our method for input data with quickly decaying singular
values, as it is typically the case in model reduction applications (cf.\ \cref{rem:hapod_efficiency,rem:hapod_bounded_by_nwidths,sc:applic}).
\cref{sc:numex} contains extensive numerical experiments highlighting the applicability of our method.

Before introducing the HAPOD in \cref{sc:hapod}, we start with a 
concise summary of the POD and its properties in \cref{sc:pod}.

\section{Proper Orthogonal Decomposition}\label{sc:pod}

Proper Orthogonal Decomposition is a technique for finding
low-order approximation spaces for a given set of snapshot (data)
vectors by computing the left-singular vectors corresponding with the dominant singular values of the matrix
formed by the column-wise concatenation of the snapshot vectors.
Designations used in other fields are \emph{Principal Component Analysis},
\emph{Empirical Eigenfunctions}, \emph{Empirical Orthogonal Functions} or
\emph{Karhunen-Lo\`eve Decomposition}.
A more formal definition of the POD, which also applies to infinite-dimensional spaces,
is given as follows: 

\begin{definition}[Proper Orthogonal Decomposition (POD)]
Let $\Snap$ be a finite multiset of vectors contained in a Hilbert space $V$
and denote by $|\Snap|$ its cardinality.
With $e_1, \ldots, e_{|\Snap|} \in \R^{|\Snap|}$ the canonical
basis of $\R^{|\Snap|}$, and $\{s_1, \ldots, s_{|\Snap|}\} = \Snap$ an arbitrary enumeration of the
elements of $\Snap$,
we call sequences $\varphi_1, \ldots, \varphi_{|\Snap|} \in V$, $\sigma_1, \ldots, \sigma_{|\Snap|} \in \R$
{Proper Orthogonal Decomposition} modes and singular values of
$\Snap$ if $\varphi_m$, $\sigma_m$ are the left-singular vectors and singular
values of the linear mapping
$\SnapMap$ given by
\begin{equation}\label{eq:snapshot_map}
\SnapMap: \R^{|\Snap|} \to V,\quad e_m \mapsto \SnapMap(e_m):=s_m
\qquad 1 \leq m \leq |\Snap|.
\end{equation}
\end{definition}

\begin{remark}
Due to the uniqueness properties of the SVD, the POD
singular values of a given multiset $\Snap$ are uniquely defined. 
The POD modes are uniquely defined up to orthogonal mappings of subspaces of $V$
spanned by modes with the same singular value.
\end{remark}

\begin{remark}\label{def:method_of_snapshots}
	A simple yet numerically robust algorithm for the computation of the SVD
	of $\SnapMap$ is based on computing the eigenvalue decomposition of the Gramian
	$G:= (s_i, s_j)_{i,j}$ to the snapshot set $\Snap = \{s_1, \ldots, s_{|\Snap|}\}$.
	The $k$-th POD mode $\varphi_k$ is then obtained as
    \begin{equation*}
       \varphi_k = \frac{1}{\sqrt{\lambda_k}} \sum_{i=1}^{|\Snap|} \psi_{k,i} \cdot s_i,
    \end{equation*}
    where $\lambda_k$ is the $k$-th largest eigenvalue of G and $\psi_{k,i}$ the $i$-th component
    of the corresponding eigenvector.\footnote{Note that the condition number of $G$ is the square of the condition number
    of $\SnapMap$, limiting the numerical accuracy of this method in comparison to other
    SVD algorithms.}
\end{remark}

The basic idea of the algorithm outlined in \cref{def:method_of_snapshots}, which in the context of model reduction is
also known as \emph{method of snapshots} \cite{sirovich87}, is to replace the difficult
task of computing the SVD of a large snapshot matrix with the easier task of computing
the eigenvalue decomposition of the much smaller (symmetric) Gramian, which can be obtained efficiently by optimized matrix-matrix multiplication algorithms.

While this approach performs well if there are relatively few snapshot vectors
(i.e. ``tall and skinny'' snapshot matrices), it suffers from the quadratic
growth in computational complexity for computing the Gramian when the number of snapshots increases.
However, using this method in conjunction with the herein proposed HAPOD algorithm can drastically reduce
the overall required computational effort, making it feasible even for large snapshot
sets $\Snap$ (see \cref{sc:applic}).

The main reason for the importance of the POD is the fact that it produces best approximating spaces
in the $\ell^2$-sense:

\begin{theorem}[Schmidt-Eckhard-Young-Mirsky] \label{thm:pod_error}
Let $(\sigma_m, \varphi_m)$, $1 \leq m \leq |\Snap|$ be the singular values and modes
of a POD of a given snapshot multiset $\Snap$.
Then for each $1 \leq N \leq |\Snap|$, $V_N:=\Span\{\varphi_1, \ldots, \varphi_N\}$
is an $\ell^2$-best approximating space for $\Snap$ in the sense that
\begin{equation}\label{eq:pod_error}
	\sum_{s \in \Snap} \|s - P_{V_N}(s)\|^2 = \min_{\substack{X \subseteq V\\ \dim X = N}}\ \sum_{s \in \Snap} \|s - P_{X}(s)\|^2 = \sum_{m=N+1}^{|\Snap|} \sigma_m^2,
\end{equation}
where $\|\cdot\|$ denotes the norm on $V$ and $P_{X}$ is the $V$-orthogonal projection onto the linear subspace $X$.
\end{theorem}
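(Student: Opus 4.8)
The plan is to reduce the assertion to the classical Schmidt--Eckhart--Young--Mirsky estimate for the finite-rank operator $\SnapMap$ and then to prove that estimate by an elementary rearrangement argument. Throughout I use the decomposition $\SnapMap = \sum_m \sigma_m(\cdot,\psi_m)\varphi_m$ coming from the SVD in the definition of the POD, where $(\psi_m)$ is an orthonormal system in $\R^{|\Snap|}$, $(\varphi_m)$ an orthonormal system in $V$, and $\sigma_1 \ge \sigma_2 \ge \cdots \ge 0$; this is precisely the data exploited by the method of snapshots (\cref{def:method_of_snapshots}). Note also that $\SnapMap$ has finite rank $\le |\Snap|$, so the whole argument takes place, up to bookkeeping, in finite dimensions even when $\dim V = \infty$.

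First I would rewrite the objective. Since $e_1,\dots,e_{|\Snap|}$ is an orthonormal basis of $\R^{|\Snap|}$ and $\SnapMap(e_m)=s_m$, for any subspace $X\subseteq V$ of dimension $N$ with orthonormal basis $u_1,\dots,u_N$ one has $\|P_X(s)\|^2 = \sum_{i=1}^N |(s,u_i)|^2$ and $(s_m,u_i) = (e_m,\SnapMap^* u_i)$, so by Pythagoras and Parseval
\begin{equation*}
  \sum_{s\in\Snap}\|s - P_X(s)\|^2 \;=\; \sum_{m} \|s_m\|^2 \;-\; \sum_{i=1}^N \|\SnapMap^* u_i\|^2 .
\end{equation*}
Because $\sum_m \|s_m\|^2 = \sum_m \sigma_m^2$ is fixed, minimizing the left-hand side over $N$-dimensional $X$ amounts to \emph{maximizing} $\sum_{i=1}^N \|\SnapMap^* u_i\|^2$ over orthonormal $N$-tuples $(u_i)$ in $V$; allowing $\dim X < N$ only enlarges the feasible set without creating new optimizers once $N\le|\Snap|$, so the restriction $\dim X = N$ is harmless.

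Next I would expand the maximand using the SVD. From $\SnapMap^* u_i = \sum_m \sigma_m (u_i,\varphi_m)\psi_m$ and orthonormality of the $\psi_m$,
\begin{equation*}
  \sum_{i=1}^N \|\SnapMap^* u_i\|^2 \;=\; \sum_m \sigma_m^2\, t_m, \qquad t_m := \sum_{i=1}^N |(u_i,\varphi_m)|^2 = \|P_X\varphi_m\|^2 ,
\end{equation*}
and here $0\le t_m\le \|\varphi_m\|^2 = 1$ while $\sum_m t_m = \sum_{i=1}^N \sum_m |(u_i,\varphi_m)|^2 \le \sum_{i=1}^N \|u_i\|^2 = N$. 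Since the $\sigma_m^2$ are non-increasing, the quantity $\sum_m \sigma_m^2 t_m$ under the constraints $0\le t_m\le 1$, $\sum_m t_m\le N$ is bounded by $\sum_{m=1}^N \sigma_m^2$: indeed $\sum_{m=1}^N\sigma_m^2 - \sum_m\sigma_m^2 t_m = \sum_{m=1}^N\sigma_m^2(1-t_m) - \sum_{m>N}\sigma_m^2 t_m \ge \sigma_N^2\big(N - \sum_m t_m\big)\ge 0$, a one-line ``bathtub''/rearrangement estimate. Substituting back yields $\sum_{s\in\Snap}\|s-P_X(s)\|^2 \ge \sum_{m=N+1}^{|\Snap|}\sigma_m^2$ for every admissible $X$.

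Finally I would check that $X = V_N = \Span\{\varphi_1,\dots,\varphi_N\}$ attains this bound: then $t_m = 1$ for $m\le N$ and $t_m = 0$ otherwise (or, equivalently, $\|\SnapMap^*\varphi_i\|^2 = \sigma_i^2$ from $\SnapMap^*\varphi_i = \sigma_i\psi_i$), so equality holds throughout and $\sum_{s\in\Snap}\|s - P_{V_N}(s)\|^2 = \sum_m\sigma_m^2 - \sum_{i=1}^N\sigma_i^2 = \sum_{m=N+1}^{|\Snap|}\sigma_m^2$, which is exactly both identities in \eqref{eq:pod_error}. I expect the only delicate points to be purely organizational — justifying the SVD/Parseval manipulations when $V$ is infinite-dimensional and handling vanishing singular values — all of which dissolve via the finite-rank reduction; the actual mathematical content is the elementary rearrangement inequality of the third step.
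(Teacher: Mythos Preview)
Your argument is correct: the reduction to maximizing $\sum_i\|\SnapMap^*u_i\|^2$ via Parseval, the SVD expansion into $\sum_m\sigma_m^2 t_m$ with $0\le t_m\le 1$ and $\sum_m t_m\le N$, and the one-line rearrangement estimate are all sound, and the attainment by $V_N$ is immediate. The only minor quibble is cosmetic: the sentence about ``allowing $\dim X<N$'' is not needed since the statement already fixes $\dim X=N$.

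As for comparison, the paper does not actually prove \cref{thm:pod_error}; it is quoted as the classical Schmidt--Eckhart--Young--Mirsky theorem and used without argument. Your proof is a clean, self-contained variational derivation (essentially the Ky~Fan maximum principle specialized to $\SnapMap^*\SnapMap$), which is a perfectly standard route; an alternative one often sees is to invoke Weyl's or Mirsky's inequalities for singular values of low-rank perturbations, but your approach has the advantage of being elementary and of making the role of the orthogonal projection explicit.
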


The HAPOD algorithm presented in \cref{sc:hapod} can be based on any pre-existing POD implementation.
We formalize the concept of a POD algorithm as follows:

\begin{definition}\label{def:pod}
For a given Hilbert space $V$, let $\POD$ be the mapping
\begin{equation*}
	(\Snap, \varepsilon) \mapsto \POD(\Snap, \varepsilon):= \{(\sigma_n, \varphi_n)\}_{n=1}^N,
\end{equation*}
which assigns to each finite multiset $\Snap\subseteq V$ and each $\varepsilon > 0$ the set given
by the first $N$ pairs of singular values $\sigma_n$ and modes $\phi_n$
of the POD of $\Snap$, where $N$ is the smallest nonnegative integer such that the $\ell^2$-best-approximation
error is bounded by $\varepsilon$, i.e.\ 
	$\sum_{s \in \Snap} \|s - P_{V_N}(s)\|^2 \leq \varepsilon^2$.
According to \cref{eq:pod_error}, $N$ is thus given as:
\begin{equation*}
	N = \min \biggl\{ N^\prime \in \{0,\ldots,|\Snap|\} \biggm| \sum_{n=N^\prime+1}^{|\Snap|} \sigma_n^2 \leq
	\varepsilon^2\biggr\}.
\end{equation*}
Assuming that no SVD is performed for $\varepsilon = 0$ and the original snapshot multiset is returned,
we also define $\POD(\Snap, 0):= \{(1, s)\,|\, s \in \Snap\}$.
\end{definition}

\section{Hierarchical Approximate POD (HAPOD)}\label{sc:hapod}

In this section we introduce the HAPOD algorithm (\cref{sc:hapoddef}) and provide estimates
that allow to control the approximation error as well as the number of computed POD modes (\cref{sc:main_theorems}).
Special cases for distributed and incremental HAPOD computation are discussed in \cref{sc:special_cases}.
A further discussion of the advantages of the HAPOD is contained in \cref{sc:applic}, whereas
proofs of our main theorems can be found in \cref{sc:proofs}.
The notation used in this section is summarized in \cref{tab:notation}.

\begin{table}[h]\centering
\begin{tabular}{l@{\hskip 2.0mm}l@{\hskip 10.0mm}l@{\hskip 2.0mm}l}
 $\childmap(\alpha)$ & children of node $\alpha$ in tree $\T$              &    $\mathcal{N}_\T$ & node set of tree $\T$\\
 $D$ & snapshot-to-leaf map                                   &    $\mathcal{N}_\T(\alpha)$ & nodes below $\alpha$ in tree $\T$\\
 $\varepsilon(\alpha)$ & error tolerance at node $\alpha$  &    $\rho_\T$ & root node of tree $\T$\\
 $L_\T$ & depth of tree $\T$                                  &    $\Snap$ & snapshot set\\
 $L_\T(\alpha)$ & level of node $\alpha$ in tree $\T$                      &    $\Snap_\alpha$ & input snapshots at node $\alpha$\\
 $\mathcal{L}_\T$ & leaf set of tree $\T$                                &    $\AccSnap_\alpha$ & snapshots below $\alpha$ in the tree
\end{tabular}
    \caption{Key notation. Additional notation required in the proofs of \cref{thm:hapod_bound,thm:mode_bound} is
    given in \cref{def:hapod_notation}.}
\label{tab:notation}
\end{table}

\subsection{Definition of the HAPOD}\label{sc:hapoddef}
The basic idea of the HAPOD algorithm is to replace the task of computing a POD of a given
large snapshot set $\Snap$ by several small PODs, which only depend on small subsets
of $\Snap$ and previously computed PODs. To formalize this procedure, we consider
rooted trees where each node of the tree is associated with a local POD. 

A rooted tree is a connected acyclic graph of which one node is designated as the root
of the tree. The following equivalent definition will better suit our needs:

\begin{definition}[Rooted Tree]
For an arbitrary set $X$, denote by $\operatorname{Pow}(X)$ its power set.
We then call a triple $\T = (\mathcal{N}_\T, \childmap, \rho_\T)$,
where $\mathcal{N}_\T$ is a finite set, $\rho_\T \in \mathcal{N}_\T$, and $\childmap:
\mathcal{N}_\T \to \operatorname{Pow}(\mathcal{N}_\T \setminus \{\rho_\T\})$,
 a {rooted tree} if the mapping
$\childmap$ satisfies the following
properties:
\begin{gather}
	\forall \alpha, \beta \in \mathcal{N_\T}: \alpha \neq \beta \Rightarrow \children{\alpha} \cap \children{\beta} = \emptyset,
	\label{eq:treeonlyoneparent} \\ 
	\forall\, \emptyset \neq X \subseteq \mathcal{N}_\T \setminus \{\rho_\T\}\ \, \exists \alpha\in \mathcal{N}_\T
    \setminus X:\ \childmap(\alpha) \cap X \neq
	\emptyset.
	\label{eq:treeconnected}
\end{gather}
We call elements $\alpha \in \mathcal{N}_\T$ the {nodes} of $\T$ and the elements
of $\children{\alpha}$ the {children} of $\alpha$.
Condition \cref{eq:treeonlyoneparent} states that every node of
$\T$ is the child of at most one node, whereas condition
\cref{eq:treeconnected} ensures that every node is connected to
the {root node} $\rho_\T$. Together, \cref{eq:treeonlyoneparent,eq:treeconnected} imply that there are no cycles in $\T$.

The leaf set $\mathcal{L}_\T$ of $\T$ is given by
\begin{equation*}
	\mathcal{L}_\T:= \{\alpha \in \mathcal{N}_\T \ |\ \children{\alpha} = \emptyset \}.
\end{equation*}
For each node $\alpha \in \mathcal{N}_\T$ we define the {nodes below} $\alpha$,
$\mathcal{N}_\T(\alpha)$, recursively by the relation
\begin{equation*}
	\mathcal{N}_\T(\alpha) := \{\alpha\}\ \cup \bigcup_{\beta \in \children{\alpha}}
	\mathcal{N}_\T(\beta).
\end{equation*}
Finally, we define the {level} map $L_\T: \mathcal{N}_\T \to \mathbb{N}$ recursively as
\begin{equation*}
	L_\T(\alpha):= \max (\{L_\T(\beta) \ | \ \beta \in \children{\alpha}\} \cup \{0\}) + 1,
\end{equation*}
and call $L_\T := L_\T(\rho_\T)$ the {depth} of $\T$.
\end{definition}

Given a tree $\T$, the HAPOD algorithm works by first assigning vectors of a given snapshot
set $\Snap$ to the leaves of the tree. Then, starting with the leaves, a
POD of the local input data is computed at each node. The resulting modes are scaled by their corresponding
singular values and passed on as input to the parent node. The final HAPOD modes are collected
as the output of the root node $\rho_\T$ (cf.\ \cref{fig:special_cases,fig:boltzmannhapod}).
The precise definition is given as follows:

\begin{definition}[Hierarchical Approximate POD (HAPOD)]\label{def:hapod}
Let $\Snap \subseteq V$ be a finite multiset of snapshot vectors in a Hilbert space $V$.
Given a rooted tree $\T$ and mappings
\begin{equation*}
	D: \Snap \to \mathcal{L}_\T, \qquad \varepsilon: \mathcal{N}_\T \to \R^{\geq 0},
\end{equation*}
define recursively for each $\alpha \in \mathcal{N}_\T$
\begin{equation*}
\HAPODSTDE(\alpha):=\POD(\Snap_\alpha, \varepsilon(\alpha)),
\end{equation*}
where the local input data multiset $\Snap_\alpha$ is given by
\begin{equation*}
	\Snap_\alpha := 
	\begin{cases}
		D^{-1}(\{\alpha\}) & \alpha \in \mathcal{L}_\T,\\
		\bigcup_{\beta \in \children{\alpha}} \Bigl\{\sigma_n\cdot\varphi_n\
		|\ (\sigma_n, \varphi_n) \in \HAPODSTDE(\beta)\Bigr\} & \text{otherwise},
	\end{cases}
\end{equation*}
with $D^{-1}(\{\alpha\}) := \{s \in \Snap \,|\, D(s) \in \{\alpha\}\} = \{s \in \Snap \,|\, D(s) = \alpha\}$ being the multiset of all snapshot vectors
assigned to the leaf node $\alpha$.
We call $\HAPODSTDE:= \HAPODSTDE(\rho_\T)$ the {hierarchical approximate POD} of
$\Snap$ for the tree $\T$, the snapshot distribution $D$ and the local
tolerances $\varepsilon$.
\end{definition}

\subsection{Special Cases: Distributed and Incremental HAPOD}\label{sc:special_cases}
The HAPOD is defined for arbitrary rooted trees, yet two classes of tree topologies present important special cases
due to their ease of application. Both cases have also been discussed in \cite{Paul-Dubois-TaineAmsallem2015}.

One special case of the HAPOD constitutes a ``flat'' tree (star), in which all leaf nodes are the children of the root node,
i.e.\ $\children{\rho_{\T}} = \mathcal{N}_{\T} \setminus \{\rho_{\T}\}$,
and the snapshot set $\Snap$ is distributed evenly among the leaf nodes
(see \cref{fig:dapod}).
For such a tree the HAPOD is given as: 
\begin{multline*}
\HAPODSTDE[\T](\rho_{\T})= \\
\POD\Bigl(\bigcup_{\beta \in \mathcal{L}_{\T}} \Bigl\{\sigma_n\cdot\varphi_n\,\Bigm|\,(\sigma_n, \varphi_n) \in
\POD(D^{-1}(\{\beta\}),\varepsilon(\beta))\Bigr\},\, \varepsilon(\rho_{\T})\Bigr).
\end{multline*}

\begin{figure}
	\begin{subfigure}[t]{0.48\textwidth}\centering
		\raisebox{0.\height}{\begin{tikzpicture}
			[level distance=25mm,
				thin,
				every node/.style={draw, circle, text width=0.37cm, align=center},
				level 1/.style={sibling distance=15mm},
				edge from parent/.style={draw,>-,>=triangle 45 reversed}
			]
			\node {$\rho$}
				child {node {$\beta_1$}}
				child {node {$\beta_2$}}
				child {node {$\beta_3$}}
				child {node {$\beta_4$}};
			\end{tikzpicture}}\caption{Distributed approximate POD. The PODs at the leaves $\beta_i$ can be
			 			   computed in parallel. Afterwards an additional POD is performed at
						   the root node $\rho$.}
		\label{fig:dapod}
	\end{subfigure}
	\hfill
	\begin{subfigure}[t]{0.48\textwidth}\centering
		\begin{tikzpicture}
			[level distance=12mm,
				thin,
				every node/.style={draw, circle, text width=0.37cm, align=center},
				edge from parent/.style={draw,>-,>=triangle 45 reversed},
				sibling distance=20mm,
			]
			\node {$\rho$}
				child {node {$\alpha_3$}
					child {node {$\alpha_2$}
						child {node {$\alpha_1$}}
						child {node {$\beta_1$}}
					}
				child {node {$\beta_2$}}}
				child {node {$\beta_3$}};
			\end{tikzpicture}
			\caption{Incremental HAPOD. New snapshot data enters at the nodes $\beta_i$ which
				 is then combined with the current modes by PODs at the nodes $\alpha_i$.}
			\label{fig:rapod}
	\end{subfigure}
	\caption{Trees corresponding to distributed and incremental HAPOD computation.}\label{fig:special_cases}
\end{figure}
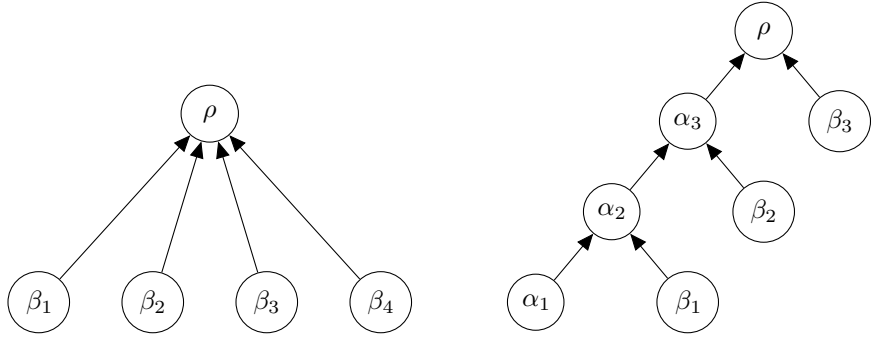

From a numerical linear algebra perspective this \emph{distributed HAPOD} is closely related to the ``horizontal slicing''
distributed SVD methods \cite{beattie06,constantine11,constantine14,sayadi14,solovyev14a,solovyev14,wang15}.
The key algorithmic difference is the horizontal partitioning of the data vectors forming the columns of the snapshot matrix
into fat chunks as opposed to the vertical partitioning into thin chunks of complete data vectors considered here. 

A second special case of the HAPOD is a ``skinny'' tree (totally unbalanced binary tree).
Each node of this tree is either a leaf or has exactly one leaf and one non-leaf as children (see \cref{fig:rapod}).
Formally, we then have $\mathcal{N}_{\T} = (\{\alpha_1, \ldots, \alpha_L\} \cup \{\beta_1, \ldots, \beta_{L-1}\})$, $\rho_{\T} = \alpha_L$,
$\children{\beta_l} = \emptyset$ for all $1 \leq l \leq L-1$,
$\children{\alpha_1} = \emptyset$ and
$\children{\alpha_l} = \{\alpha_{l-1}, \beta_{l-1}\}$ for $2 \leq l \leq L$.
Typically, one will perform no additional PODs on the input data, so $\varepsilon(\beta_l) = 0$.
In this case, the HAPOD is given as
    $\HAPODSTDE(\alpha_1)= \POD(D^{-1}(\{\alpha_1\}), \varepsilon(\alpha_1))$
and
\begin{multline*}
\HAPODSTDE(\alpha_l)=\\
\POD\Bigl(\bigl\{\sigma_n\cdot\varphi_n \ |\ (\sigma_n, \varphi_n) \in \HAPODSTDE(\alpha_{l-1})\bigr\} \cup D^{-1}(\{\beta_{l-1}\}),\, \varepsilon(\alpha_l)\Bigr).
\end{multline*}
for $2 \leq l \leq L$.
Thus, the HAPOD can be computed incrementally by a simple iterative procedure, where in each update step
a POD of the current (scaled) HAPOD modes together with the new input data is computed, whereas
old input data can be removed from memory.

To accelerate the computation of this \emph{incremental HAPOD}, an incremental SVD algorithm such as \cite{brand02}
might be used for the local POD computations.
In this case, then main theorems in \cref{sc:main_theorems} provide a means
to select truncation error tolerances for the individual SVD updates that guarantee
final approximation spaces of prescribed quality.

\subsection{Main Theorems}\label{sc:main_theorems}
Two central questions about the HAPOD are answered by the following theorems:
Given error tolerances $\varepsilon$, what is the approximation error for the computed HAPOD modes (\cref{thm:hapod_bound})?
How many modes does the HAPOD produce in comparison to a direct POD computation (\cref{thm:mode_bound})?
Only by controlling both quantities simultaneously can we arrive at an efficient approximation scheme.
The proofs to the following theorems are given in \cref{sc:proofs}.

\begin{theorem}\label{thm:hapod_bound}
Let $\Snap, \T, D, \varepsilon$ be given as in \cref{def:hapod},
let the multiset of all snapshots subordinate to the node $\alpha$ be given by $\AccSnap_\alpha := \bigcup_{\gamma
\in \mathcal{L}_\T \cap \mathcal{N}_\T(\alpha)} D^{-1}(\{\gamma\})$,
and let $P_{\alpha}$ be the $V$-orthogonal projection onto the linear space
spanned by the modes of $\HAPODSTDE(\alpha)$.
The $\ell^2$-approximation error for the HAPOD space at node~$\alpha$ is then bounded by:

\begin{equation}\label{eq:hapod-bound}
	\sum_{s \in \AccSnap_\alpha} \|s - P_\alpha(s)\|^2 \leq \ 
	\sum_{\gamma \in \mathcal{N}_\T(\alpha)} \varepsilon(\gamma)^2.
\end{equation}
\end{theorem}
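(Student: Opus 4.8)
The plan is to prove \cref{eq:hapod-bound} by induction over the tree, starting from the leaves, and to set up the induction around a suitable operator-theoretic reformulation. For every node $\alpha$ I would introduce two positive semidefinite operators on $V$: the Gramian $G_\alpha := A_\alpha A_\alpha^\ast$ of the accumulated snapshots, where $A_\alpha\colon \R^{|\AccSnap_\alpha|}\to V$ is the linear map sending the canonical basis onto the elements of $\AccSnap_\alpha$ (cf.\ \cref{eq:snapshot_map}); and the Gramian $H_\alpha := B_\alpha B_\alpha^\ast$ of the \emph{local} input $\Snap_\alpha$, defined analogously from $\Snap_\alpha$. Since $P_\alpha$ is the orthogonal projection onto the span of the modes of $\HAPODSTDE(\alpha)$, the left-hand side of \cref{eq:hapod-bound} equals $\Tr[(I-P_\alpha)G_\alpha]$. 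The key bookkeeping device will be the defect operator $R_\alpha := G_\alpha - H_\alpha$, which measures the information discarded by the truncations performed strictly below $\alpha$.

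\textbf{Elementary facts to record.} (i) $P_\alpha$ is the spectral projector of $H_\alpha$ onto its dominant eigenspace — this is the definition of $\POD$ together with the method of snapshots (\cref{def:method_of_snapshots}), and it remains true in the degenerate case $\varepsilon(\alpha)=0$ where $P_\alpha$ projects onto $\Span(\Snap_\alpha)$; in particular $P_\alpha$ commutes with $H_\alpha$, so $(I-P_\alpha)H_\alpha\ge 0$ and, by \cref{def:pod}, $\Tr[(I-P_\alpha)H_\alpha]=\sum_{t\in\Snap_\alpha}\|t-P_\alpha(t)\|^2\le\varepsilon(\alpha)^2$. (ii) For a leaf $\alpha$ one has $\AccSnap_\alpha=\Snap_\alpha$, hence $G_\alpha=H_\alpha$ and $R_\alpha=0$. (iii) For an inner node $\alpha$ the multiset $\AccSnap_\alpha$ is the disjoint union of the $\AccSnap_\beta$, $\beta\in\children{\alpha}$, so $G_\alpha=\sum_{\beta\in\children{\alpha}}G_\beta$; and since $\Snap_\alpha$ consists exactly of the scaled modes $\sigma_n\varphi_n$ produced by the children's HAPODs, with the $\varphi_n$ forming orthonormal eigenvectors of the respective $H_\beta$, a short computation with spectral decompositions gives $H_\alpha=\sum_{\beta\in\children{\alpha}}P_\beta H_\beta P_\beta$. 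Combining these identities with the commutativity from (i) (namely $H_\beta-P_\beta H_\beta P_\beta=(I-P_\beta)H_\beta$) yields the recursion
\[
	R_\alpha \;=\; \sum_{\beta\in\children{\alpha}} R_\beta \;+\; \sum_{\beta\in\children{\alpha}}(I-P_\beta)H_\beta ,
\]
a sum of positive semidefinite operators.

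\textbf{The induction.} From this recursion, an immediate induction on the tree (base case: leaves, where $R_\alpha=0$) shows that $R_\alpha\ge 0$ for every node and that
\[
	\Tr[R_\alpha] \;=\; \sum_{\beta\in\children{\alpha}}\Tr[R_\beta] \;+\; \sum_{\beta\in\children{\alpha}}\Tr[(I-P_\beta)H_\beta]\;\le\;\sum_{\gamma\in\mathcal{N}_\T(\alpha)\setminus\{\alpha\}}\varepsilon(\gamma)^2 ,
\]
using fact (i) for the children together with the inductive hypothesis and the disjointness $\mathcal{N}_\T(\alpha)=\{\alpha\}\,\dot\cup\,\bigcup_{\beta\in\children{\alpha}}\mathcal{N}_\T(\beta)$. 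The bound \cref{eq:hapod-bound} is then a two-line computation: $\sum_{s\in\AccSnap_\alpha}\|s-P_\alpha(s)\|^2=\Tr[(I-P_\alpha)G_\alpha]=\Tr[(I-P_\alpha)H_\alpha]+\Tr[(I-P_\alpha)R_\alpha]$, and since $P_\alpha,R_\alpha\ge 0$ one has $\Tr[(I-P_\alpha)R_\alpha]=\Tr[R_\alpha]-\Tr[P_\alpha R_\alpha]\le\Tr[R_\alpha]$; feeding in fact (i) and the estimate for $\Tr[R_\alpha]$ gives exactly $\varepsilon(\alpha)^2+\sum_{\gamma\in\mathcal{N}_\T(\alpha)\setminus\{\alpha\}}\varepsilon(\gamma)^2=\sum_{\gamma\in\mathcal{N}_\T(\alpha)}\varepsilon(\gamma)^2$.

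\textbf{Where the difficulty lies.} The obstacle is conceptual rather than computational: one has to identify the right invariant. A naive attempt to estimate $\|s-P_\alpha(s)\|^2$ vector by vector — for instance via $\|s-P_\alpha(s)\|^2\le\|s-P_\beta(s)\|^2+\|(I-P_\alpha)P_\beta(s)\|^2$ — is false, even after summation over a single child, because $P_\beta$ need not commute with the Gramian of $\AccSnap_\beta$. One must therefore pass to the summed, trace-level identities and crucially exploit that $P_\beta$ \emph{does} commute with the local Gramian $H_\beta$. Recognizing that $R_\alpha=G_\alpha-H_\alpha$ is precisely the quantity whose trace both telescopes additively over the tree and absorbs the slack in the final estimate is the heart of the argument; everything else is routine linear algebra, and since $\Snap$ is finite all operators involved are of finite rank, so the traces are well defined even when $V$ is infinite-dimensional.
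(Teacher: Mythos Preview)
Your argument is correct and is genuinely different from the paper's. The paper introduces, besides the accumulated snapshot map $\AccSnapMap_\alpha$, an auxiliary map $\ProjSnapMap_\alpha$ obtained by successively projecting snapshots through the child projections, proves a recursion lemma $P_\alpha\circ\ProjSnapMap_\alpha=\Psi_\alpha\circ\widetilde{\Lambda}_\alpha^*$ and an orthogonality lemma $(X(\AccSnapMap_\alpha-\ProjSnapMap_\alpha),\,Y\ProjSnapMap_\alpha)_2=0$, and then bounds $\|\AccSnapMap_\alpha-P_\alpha\circ\ProjSnapMap_\alpha\|_2^2$ by induction, using the best-approximation property of $P_\alpha$ to pass from this quantity to the left-hand side of \cref{eq:hapod-bound}. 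You instead work directly with the outer Gramians $G_\alpha,H_\alpha$ on $V$ and identify the defect $R_\alpha=G_\alpha-H_\alpha$ as the right inductive invariant: positive semidefinite, with trace telescoping to $\sum_{\gamma\neq\alpha}\varepsilon(\gamma)^2$. This is more economical for \cref{thm:hapod_bound} alone -- no auxiliary maps, no orthogonality lemma, and the crucial commutation $[P_\beta,H_\beta]=0$ replaces the paper's singular-vector bookkeeping. The trade-off is that the paper's machinery is reused verbatim in the proof of \cref{thm:mode_bound} (where the orthogonality lemma shows $\|(I-\widetilde{P}_\alpha)\ProjSnapMap_\alpha\|_2\le\|(I-\widetilde{P}_\alpha)\AccSnapMap_\alpha\|_2$) and yields the explicit low-rank factorization \cref{eq:low_rank_approx}; your Gramian framework would need a separate argument there, since you never construct $\ProjSnapMap_\alpha$ or $\widetilde{\Lambda}_\alpha$.
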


\begin{theorem}\label{thm:mode_bound}
	With the same notation as in \cref{thm:hapod_bound} we have for each $\alpha \in
	\mathcal{N}_\T$ the following bound for the number of HAPOD modes:
\begin{equation}\label{eq:mode_bound}
	\Bigl|\HAPODSTDE(\alpha)\Bigr| \leq
	\Bigl|\POD\Bigl(\AccSnap_\alpha, \varepsilon(\alpha)\Bigr)\Bigr|.
\end{equation}	
\end{theorem}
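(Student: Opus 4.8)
The plan is to reduce \cref{thm:mode_bound} to a single comparison between the \emph{correlation operators} of $\Snap_\alpha$ and of $\AccSnap_\alpha$, proved by an induction over the tree. For a finite multiset $\mathcal{U}\subseteq V$ let $C(\mathcal{U})$ be the self-adjoint, positive semidefinite, finite-rank operator on $V$ given by $C(\mathcal{U})v:=\sum_{s\in\mathcal{U}}(v,s)\,s$; this is $\ubar{\mathcal{U}}$ composed with its adjoint, so its eigenvectors form an orthonormal basis of POD modes of $\mathcal{U}$ and its eigenvalues, in non-increasing order, are the squared POD singular values $\sigma_n(\mathcal{U})^2$. Abbreviate $C_\alpha:=C(\Snap_\alpha)$, $\widetilde{C}_\alpha:=C(\AccSnap_\alpha)$. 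I claim \cref{eq:mode_bound} follows once we know the Loewner-order inequality
\begin{equation}\label{eq:correlation_order}
C_\alpha\preceq\widetilde{C}_\alpha\qquad\text{for every }\alpha\in\mathcal{N}_\T.
\end{equation}
Indeed, \cref{eq:correlation_order} and Weyl's monotonicity principle (via the Courant--Fischer min--max formula) give $\sigma_n(\Snap_\alpha)^2=\lambda_n(C_\alpha)\le\lambda_n(\widetilde{C}_\alpha)=\sigma_n(\AccSnap_\alpha)^2$ for every index $n$, hence $\sum_{n>N'}\sigma_n(\Snap_\alpha)^2\le\sum_{n>N'}\sigma_n(\AccSnap_\alpha)^2$ for every $N'\ge 0$. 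Consequently the smallest $N'$ with $\sum_{n>N'}\sigma_n(\Snap_\alpha)^2\le\varepsilon(\alpha)^2$ is no larger than the smallest $N'$ with $\sum_{n>N'}\sigma_n(\AccSnap_\alpha)^2\le\varepsilon(\alpha)^2$, and by \cref{def:pod} these two indices are precisely $|\HAPODSTDE(\alpha)|=|\POD(\Snap_\alpha,\varepsilon(\alpha))|$ and $|\POD(\AccSnap_\alpha,\varepsilon(\alpha))|$, which is \cref{eq:mode_bound}. The borderline case $\varepsilon(\alpha)=0$ escapes this because of the special convention in \cref{def:pod}; there one instead uses the count $|\HAPODSTDE(\alpha)|=|\Snap_\alpha|=\sum_{\beta\in\children{\alpha}}|\HAPODSTDE(\beta)|\le\sum_{\beta\in\children{\alpha}}|\AccSnap_\beta|=|\AccSnap_\alpha|$, the middle step being \cref{eq:mode_bound} for the children, available by the same tree induction.

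It remains to prove \cref{eq:correlation_order}, which I would do by induction on the level $L_\T(\alpha)$. If $\alpha\in\mathcal{L}_\T$, then $\Snap_\alpha=D^{-1}(\{\alpha\})=\AccSnap_\alpha$ and \cref{eq:correlation_order} holds with equality. Let $\alpha\notin\mathcal{L}_\T$ have children $\beta_1,\dots,\beta_k$. Two structural identities carry the argument. First, by the tree axioms the sets $\mathcal{L}_\T\cap\mathcal{N}_\T(\beta_i)$ are pairwise disjoint with union $\mathcal{L}_\T\cap\mathcal{N}_\T(\alpha)$, so $\AccSnap_\alpha$ is the disjoint union of the sub-multisets $\AccSnap_{\beta_i}$; since $C(\cdot)$ is additive over disjoint unions of multisets, $\widetilde{C}_\alpha=\sum_{i=1}^{k}\widetilde{C}_{\beta_i}$. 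Second, $\Snap_\alpha$ is the multiset of all vectors $\sigma_n\varphi_n$ with $(\sigma_n,\varphi_n)\in\HAPODSTDE(\beta_i)$, $1\le i\le k$; as, for fixed $i$, these $\varphi_n$ form an orthonormal subfamily of the eigenbasis of $C_{\beta_i}$ with eigenvalues $\sigma_n^2$, summing the rank-one operators $\sigma_n^2\,\varphi_n\varphi_n^*$ gives
\begin{equation*}
C_\alpha=\sum_{i=1}^{k}P_{\beta_i}\,C_{\beta_i}\,P_{\beta_i},
\end{equation*}
with $P_{\beta_i}$ the orthogonal projection from \cref{thm:hapod_bound} onto the span of the modes of $\HAPODSTDE(\beta_i)$ (when $\varepsilon(\beta_i)=0$ the $i$-th summand is just $C_{\beta_i}$, which only helps). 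Discarding the truncated eigenvalues can only decrease the operator, so $P_{\beta_i}C_{\beta_i}P_{\beta_i}\preceq C_{\beta_i}$, and the induction hypothesis gives $C_{\beta_i}\preceq\widetilde{C}_{\beta_i}$; summing over $i$ yields $C_\alpha=\sum_i P_{\beta_i}C_{\beta_i}P_{\beta_i}\preceq\sum_i C_{\beta_i}\preceq\sum_i\widetilde{C}_{\beta_i}=\widetilde{C}_\alpha$, which closes the induction.

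I expect the difficulty to lie in careful bookkeeping rather than in any new idea. The two points requiring care are: (i) the identity $C_\alpha=\sum_i P_{\beta_i}C_{\beta_i}P_{\beta_i}$, which rests on the modes retained at $\beta_i$ being eigenvectors of $C_{\beta_i}$, so that rescaling a truncated orthonormal mode family by the singular values reproduces a spectral compression of $C_{\beta_i}$ (the same observation powers the $\varepsilon=0$ count); and (ii) in the reduction, comparing the singular value tails only after passing through the termwise eigenvalue domination, rather than attempting to get the tail-sum inequality from the Loewner order directly. The convention $\varepsilon(\cdot)=0$ in \cref{def:pod} must be handled separately throughout, as it is incompatible with the spectral argument. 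Everything else is the elementary calculus of positive operators, and the induction over $\T$ parallels the one behind \cref{thm:hapod_bound}.
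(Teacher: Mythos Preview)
Your argument is correct, and it is genuinely different from the paper's. The paper does \emph{not} compare correlation operators in the Loewner order. Instead it builds an auxiliary ``projected'' snapshot map $\ProjSnapMap_\alpha$ (\cref{def:hapod_notation}), shows via \cref{thm:main_recursion_lemma} that $\ProjSnapMap_\alpha$ differs from $\SnapMap_\alpha$ only by a right-multiplication with a matrix having orthonormal columns, so that $\Snap_\alpha$ and $\ProjSnap_\alpha$ share the same singular values; then it uses the orthogonality relation of \cref{thm:orthogonality_lemma} (with $X=Y=1-\widetilde{P}_\alpha$) to show that the optimal POD space for $\AccSnap_\alpha$ already achieves $\ell^2$-error $\le\varepsilon(\alpha)$ on $\ProjSnap_\alpha$, and concludes by POD optimality. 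In effect the paper compares $\Snap_\alpha$ to $\AccSnap_\alpha$ \emph{through} the intermediate object $\ProjSnap_\alpha$, reusing the two lemmas that already power \cref{thm:hapod_bound}.

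Your route is more self-contained: the single Loewner inequality $C(\Snap_\alpha)\preceq C(\AccSnap_\alpha)$, proved by a clean tree induction using only $P_{\beta_i}C_{\beta_i}P_{\beta_i}\preceq C_{\beta_i}$ and additivity of $C(\cdot)$ over disjoint multiset unions, yields termwise singular-value domination and hence the tail-sum comparison directly. You avoid both auxiliary lemmas entirely, at the price of not sharing any infrastructure with the proof of \cref{thm:hapod_bound}. One minor point: your handling of the $\varepsilon(\alpha)=0$ convention is fine, but note that it really requires its own tree induction (on the cardinality inequality $|\Snap_\alpha|\le|\AccSnap_\alpha|$), since the children may themselves have $\varepsilon(\beta_i)=0$; you gesture at this with ``the same tree induction'', which is correct once made explicit.
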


In model reduction applications, the $\ell^2$-\emph{mean} approximation error is often the desired quantity
to optimize for, since in many cases neither the number of POD input vectors is known a priori (think of
adaptive time stepping schemes) nor the number of vectors which are to be approximated by the generated
POD space (i.e. the number of reduced model evaluations).
Thus, we want to define $\varepsilon$ such that the mean $\ell^2$-error is bounded by a desired
target tolerance $\varepsilon^*$, independently from the total number of input modes $|\Snap|$.
At the same time, the number of HAPOD output modes should not be much larger than the optimal quantity
$|\MPOD(\Snap, \varepsilon^*)|$, where
\begin{equation*}
   \MPOD(\Snap, \varepsilon^*) := \POD(\Snap, \sqrt{|\Snap|} \cdot \varepsilon^*).
\end{equation*}
In view of the above results, this motivates the following choice for $\varepsilon$,
where the parameter $\omega$ allows us to choose a trade-off between
efficiency of the HAPOD and the optimality of the resulting approximation space:

\begin{theorem} \label{cor:hapod_bound}
Using the same notation as in \cref{thm:hapod_bound}, let for $\varepsilon^* > 0$ the HAPOD
tolerances $\varepsilon(\rho_\T)$, $\varepsilon(\alpha)$, $\alpha \in \mathcal{N}_\T \setminus \{\rho_\T\}$
be given by:
\begin{equation*}
	\varepsilon(\rho_\T):= \sqrt{|\Snap|}\cdot \omega \cdot \varepsilon^*, \qquad
	\varepsilon(\alpha):= \sqrt{|\AccSnap_\alpha|}\cdot {(L_\T - 1)}^{-1/2}\cdot \sqrt{1 -
	\omega^2}\cdot
\varepsilon^*,
\end{equation*}
where $0 \leq \omega \leq 1$ is an arbitrary parameter.
Then we have the following bounds for the final $\ell^2$-mean approximation error and number of HAPOD modes:
\begin{equation}\label{eq:global_bounds}
	\frac{1}{|\Snap|} \sum_{s \in \Snap} \|s - P_{\rho_\T}(s)\|^2 \leq \varepsilon^{*2}
\quad\text{and}\quad
\Bigl|\HAPODSTDE\Bigr| \leq \Bigl|\MPOD(\Snap, \omega \cdot\varepsilon^*)\Bigr|.
\end{equation}
Moreover, the number of HAPOD modes at the intermediate stages $\alpha$ is
bounded by:
\begin{equation}\label{eq:local_bound}
	\Bigl|\HAPODSTDE(\alpha)\Bigr| \leq \Bigl|\MPOD(\AccSnap_\alpha, (L_\T - 1)^{-1/2} \cdot \sqrt{1-\omega^2} \cdot \varepsilon^*)\Bigr|.
\end{equation}
\end{theorem}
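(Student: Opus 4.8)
The plan is to derive \cref{cor:hapod_bound} purely as a corollary of \cref{thm:hapod_bound,thm:mode_bound} by plugging in the proposed tolerances; the only real work is bookkeeping on sums over the tree. First I would establish the error bound. By \cref{thm:hapod_bound} applied at the root, $\sum_{s\in\Snap}\|s-P_{\rho_\T}(s)\|^2 \le \sum_{\gamma\in\mathcal{N}_\T}\varepsilon(\gamma)^2$, and the right-hand side splits as $\varepsilon(\rho_\T)^2 + \sum_{\alpha\neq\rho_\T}\varepsilon(\alpha)^2 = |\Snap|\,\omega^2\varepsilon^{*2} + \frac{(1-\omega^2)\varepsilon^{*2}}{L_\T-1}\sum_{\alpha\neq\rho_\T}|\AccSnap_\alpha|$. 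So the crux of the error estimate is the combinatorial identity $\sum_{\alpha\in\mathcal{N}_\T\setminus\{\rho_\T\}}|\AccSnap_\alpha| \le (L_\T-1)\,|\Snap|$. This holds because each snapshot $s\in\Snap$, assigned to a leaf $D(s)$, contributes to $|\AccSnap_\alpha|$ exactly for those non-root $\alpha$ lying on the path from $D(s)$ to $\rho_\T$; the number of such nodes is $L_\T(\rho_\T) - L_\T(\text{parent-level counting})$, which is at most $L_\T - 1$ since the level map is strictly increasing along root-ward edges and the root contributes nothing. Dividing by $|\Snap|$ then yields exactly $\varepsilon^{*2}$.

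Next I would handle the mode bounds. For the final count, \cref{thm:mode_bound} at the root gives $|\HAPODSTDE| \le |\POD(\AccSnap_{\rho_\T},\varepsilon(\rho_\T))|$; since $\AccSnap_{\rho_\T}=\Snap$ and $\varepsilon(\rho_\T) = \sqrt{|\Snap|}\,\omega\varepsilon^*$, this is precisely $|\POD(\Snap,\sqrt{|\Snap|}\,\omega\varepsilon^*)| = |\MPOD(\Snap,\omega\varepsilon^*)|$ by definition of $\MPOD$. For the intermediate bound \cref{eq:local_bound}, the same argument applied at a general node $\alpha$ gives $|\HAPODSTDE(\alpha)| \le |\POD(\AccSnap_\alpha,\varepsilon(\alpha))|$, and substituting $\varepsilon(\alpha) = \sqrt{|\AccSnap_\alpha|}\cdot(L_\T-1)^{-1/2}\sqrt{1-\omega^2}\,\varepsilon^*$ together with the definition of $\MPOD$ yields exactly $|\MPOD(\AccSnap_\alpha,(L_\T-1)^{-1/2}\sqrt{1-\omega^2}\,\varepsilon^*)|$. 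Both mode bounds are thus immediate once \cref{thm:mode_bound} is in hand.

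The main obstacle, such as it is, is the path-counting lemma $\sum_{\alpha\neq\rho_\T}|\AccSnap_\alpha|\le(L_\T-1)|\Snap|$: one must be careful that $\AccSnap_\alpha$ is defined via leaves below $\alpha$, so a snapshot $s$ is counted in $|\AccSnap_\alpha|$ iff $D(s)\in\mathcal{N}_\T(\alpha)$, i.e.\ iff $\alpha$ is an ancestor of (or equal to) $D(s)$. The set of non-root ancestors-or-self of a leaf at level $\ell$ has cardinality at most $L_\T-1$, because levels strictly decrease toward the leaves and the chain from that leaf up to (but excluding) the root has length at most $L_\T - 1$. Summing over all $s\in\Snap$ and exchanging the order of summation gives the claim. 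One should also note the degenerate case $L_\T = 1$ (the tree is a single node): then $\mathcal{N}_\T\setminus\{\rho_\T\}=\emptyset$, the formula for $\varepsilon(\alpha)$ is vacuous, $\varepsilon(\rho_\T)=\sqrt{|\Snap|}\,\omega\varepsilon^*$, and with the natural reading $\omega=1$ the statement reduces to the Schmidt--Eckhart--Young--Mirsky bound; for $L_\T\ge 2$ the division by $L_\T-1$ is harmless. With these observations all three inequalities in \cref{eq:global_bounds,eq:local_bound} follow directly.
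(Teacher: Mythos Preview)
Your proposal is correct and follows essentially the same approach as the paper: both derive the error bound directly from \cref{thm:hapod_bound} and the mode bounds directly from \cref{thm:mode_bound} and the definition of $\MPOD$. The only cosmetic difference is in how the combinatorial inequality $\sum_{\alpha\neq\rho_\T}|\AccSnap_\alpha|\le(L_\T-1)|\Snap|$ is justified: the paper groups the non-root nodes by level and uses that the sets $\AccSnap_\gamma$ for nodes $\gamma$ at a fixed level are disjoint (so each level contributes at most $|\Snap|$), whereas you group by snapshot and bound the number of non-root ancestors of each leaf by $L_\T-1$; these are the two ways of double-counting the same set of pairs.
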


\begin{remark}\label{rem:hapod_efficiency}
	Note that the number of local POD modes $|\HAPODSTDE(\alpha)|$ determines the size of the input
	$\Snap_{\beta}$ for the next POD at the parent node $\beta$, and hence the effort required for its
	computation.
	Choosing a large $\omega \to 1$ will reduce the number of final HAPOD modes at the price of larger
	local PODs.
	A small $\omega \to 0$ will minimize the costs for computing the HAPOD in exchange for a larger number
	of final modes to guarantee the prescribed error bound.
\end{remark}
\begin{remark}\label{rem:hapod_bounded_by_nwidths}
	Since we consider the \emph{mean} square approximation error, it is possible for the bound \cref{eq:local_bound}
    that we have
	\[|\MPOD(\AccSnap_\alpha, \delta)| > |\MPOD(\Snap, \delta)|,\]
    where $\delta := (L_\T - 1)^{-1/2} \cdot  \sqrt{1 - \omega^2} \cdot \varepsilon^*$.
This might be the case when the principal directions of the snapshot set $\AccSnap_\alpha$ are underrepresented in
the full snapshot set $\Snap$.
However, if $N^\prime := \min \{ N \in \mathbb{N} \Bigm| d_N(\Snap) \leq \delta\}$, where
\begin{equation*}
d_N(\Snap) := \min_{\substack{X \subseteq V \text{ lin subsp.}\\ \dim X \leq N}}\, \max_{s \in \Snap} \, \|s - P_X(s)\|,
\end{equation*}
is the so-called Kolomogorov $N$-width of $\Snap$, and $X_{N^\prime}$ is a minimizer for $d_{N^\prime}(\Snap)$,
we always have
\[{|\AccSnap_\alpha|}^{-1}\sum_{s
\in \AccSnap_\alpha}\|s - P_{X_{N^\prime}}(s)\|^2 \leq \max_{s \in \AccSnap_\alpha} \|s - P_{X_{N^\prime}}(s)\|^2 \leq
\max_{s \in \Snap} \|s - P_{X_{N^\prime}}(s)\|^2 \leq \delta.\]
Thus, due to the optimality of the POD (\cref{thm:pod_error}) we have $|\MPOD(\AccSnap_\alpha, \delta)| \leq
N^\prime$, and the number of modes at $\alpha$ can be bounded by
\begin{equation}\label{eq:modes_vs_dn}
	\Bigl|\HAPODSTDE(\alpha)\Bigr| \leq \min \Bigl\{ N \in \mathbb{N} \Bigm| d_N(\Snap) \leq (L_\T - 1)^{-1/2} \cdot
    \sqrt{1-\omega^2} \cdot \varepsilon^*\Bigr\}.
\end{equation}
In many cases it is known theoretically or heuristically that $d_N(\Snap)$ shows rapid \mbox{(sub-)exponential} decay
for increasing $N$. In these cases, \cref{eq:modes_vs_dn} will be an effective upper bound for the number of local HAPOD
modes, independent of the chosen snapshot distribution $D$.
\end{remark}

\begin{remark}[Low-rank approximation of the snapshot mapping]
By additionally keeping track of the local right-singular vectors appearing in the
HAPOD algorithm, we easily obtain a low-rank approximation of the global
snapshot mapping $\AccSnapMap_\alpha: \mathbb{R}^{|\AccSnap_\alpha|} \to V$ defined in
\cref{def:hapod_notation}.
More precisely, by \cref{thm:main_recursion_lemma} and \cref{eq:hapod_error_estimate_in_proof}
we immediately have the rank-$\Bigl|\HAPODSTDE(\alpha)\Bigr|$ approximation:
\begin{equation}\label{eq:low_rank_approx}
    \| \AccSnapMap_\alpha - \Psi_\alpha \circ \widetilde{\Lambda}_\alpha^* \|_2^2  \ \leq
	\sum_{\gamma \in \mathcal{N}_\T(\alpha)} \varepsilon(\gamma)^2
\end{equation}
in the Hilbert-Schmidt (Frobenius) norm, with $\Psi_\alpha$,  $\widetilde{\Lambda}_\alpha$ 
given as in \cref{def:hapod_notation}.
\end{remark}

\subsection{Algorithmic Benefits}\label{sc:applic}
\cref{thm:hapod_bound,thm:mode_bound} show that, with an appropriate choice of local
error tolerances $\varepsilon$ (\cref{cor:hapod_bound}), the HAPOD produces
approximation spaces of a quality comparable to a POD with the same target
error tolerance.
At the same time, the HAPOD offers several benefits, which for problems
with fast decaying singular values can lead to dramatic speedups in computation time.

\paragraph{Reduced memory requirements}
If the input data for a POD cannot be kept completely in memory,
huge performance penalties are to be expected,
since for standard POD algorithms, repeated access of every snapshot vector is required.
If the data is kept on a mass storage device, the overall performance of the algorithm will usually be bounded by the data transfer speed.

For the HAPOD, at each node $\alpha$, only the vectors
$\Snap_\alpha$ are required as input to a local POD where,
typically, $|\Snap_\alpha| \ll |\Snap|$
so that $\Snap_\alpha$ can be kept completely in memory.

If only the POD, and not the snapshots themselves, is targeted by the computation,
the HAPOD can obtain the result without accessing mass storage altogether (cf.~\cref{sc:boltzmann}).
In particular, an incremental HAPOD of a time series may be computed even if the whole time series would not fit into memory
(cf.\ \cref{sc:special_cases,subsc:livecompr}).

\paragraph{Simple parallelization}
To compute the local POD at node $\alpha$, only the output of the PODs at the
child nodes $\mathcal{C}_\T(\alpha)$ is required.
In particular, for each $1 \leq l \leq L_\T$, all PODs at the nodes $\{\alpha \in \mathcal{N}_\T\ |\ {L}_\T(\alpha) = l\}$
can be computed in parallel without any communication, 
which is typically the bottleneck for distributed computations.
Intermediate results have to be communicated only vertically up the tree,
and the communicated data encompasses only low-rank quantities of computed
POD modes and singular values (cf.\ \cref{sc:special_cases,sc:gramian,sc:boltzmann}).

\paragraph{Generality}
The HAPOD can be applied using any pre-existing, optimized POD algorithm.
For instance, the HAPOD could be used to perform incremental data compression
for an MPI (Message Passing Interface) \cite{mpi} distributed model, where each sub-POD is computed via a parallelized
SVD algorithm.
In \cref{sc:boltzmann} we speed up the POD algorithm in \cref{def:method_of_snapshots} by exploiting the
block structure of the local Gramian similar to Brand's algorithm \cite{brand06}.

\paragraph{Lower algorithmic complexity}
A widely used, simple and reliable algorithm for POD computation
is to compute the eigenvalue decomposition of the Gramian to
$\Snap$ (cf.\ \cref{def:method_of_snapshots}).
In the case of $|\Snap| \ll d:=\dim(V)$, the Gramian computation dominates
the overall runtime for the algorithm with a computational complexity of 
$\mathcal{O}(|\Snap|^2d)$.
For larger snapshots sets $\Snap$ the quadratic increase in
complexity makes this method expensive in comparison to more advanced
algorithms (such as Lanczos or randomized methods \cite{chen09,halko11}), which scale only linearly in the number of snapshot vectors

Application of the HAPOD algorithm largely mitigates this issue.
In particular, for a balanced $n$-ary tree $\T$ with single vectors attached to
the leaves, the HAPOD using this POD algorithm
requires at most $\mathcal{O}(|\Snap|\log(|\Snap|)\widehat{N}^2d)$ operations for Gramian computation,
where $\widehat{N}:=\max_{\alpha \in \mathcal{N}_\T} |\HAPODSTDE(\alpha)|$ denotes the maximum
number of local output modes.
Assuming that the error tolerances $\varepsilon$ are
chosen according to \cref{cor:hapod_bound} for fixed $\varepsilon^*$, $\omega$, and assuming 
that the Kolmogorov widths $d_N(\Snap)$ are bounded for growing
$\Snap$, then, due to \cref{eq:modes_vs_dn}, $\widehat{N}$ will only depend on the depth $L_\T$ of $\T$.
If we furthermore assume that $d_N(\Snap)$ decays exponentially with increasing $N$,
we have $\widehat{N} = \mathcal{O}(\log(L_\T)) = \mathcal{O}(\log(\log(|\Snap|)))$.
Thus, the overall effort for computing the Gramians is reduced to
$\mathcal{O}(|\Snap|\log(|\Snap|)\log(\log(|\Snap|))^2d)$.

\subsection{Proofs of Main Theorems}\label{sc:proofs}

In this section we prove our main results (\cref{thm:hapod_bound,thm:mode_bound}).
We will require some additional notation:

\begin{definition}[Additional notation]\label{def:hapod_notation}
For each $\alpha \in \mathcal{N}_\T \setminus \mathcal{L}_\T$ fix an arbitrary enumeration
$\children{\alpha, 1},\ldots, \children{\alpha,|\children{\alpha}|}$ of $\children{\alpha}$.
For each $\alpha \in \mathcal{N}_\T$ we define mappings
\begin{equation*}\SnapMap_\alpha: \R^{|\Snap_\alpha|} \to V, \quad \Psi_\alpha: \R^{N_\alpha} \to V, \quad
\Lambda_\alpha: \R^{N_\alpha} \to \R^{|\Snap_\alpha|},
\end{equation*}
$N_\alpha:=|\HAPODSTDE(\alpha)|$, simultaneously recursively as follows:

As in \cref{eq:snapshot_map}, let $\SnapMap_\alpha$ map the $n$-th canonical basis
vector of $\R^{|\Snap_\alpha|}$ to the $n$-th element of $\Snap_\alpha$ for a given
enumeration of $\Snap_\alpha$. For $\alpha \in \mathcal{L}_\T$, the enumeration of
$\Snap_\alpha = D^{-1}(\{\alpha\})$ is chosen arbitrarily. For $\alpha \in \mathcal{N}_\T \setminus
\mathcal{L}_\T$, the enumeration is chosen such that the following compatibility relation is satisfied
\begin{equation}\label{eq:hapod_bound_SnapMap_compatibility}
    \SnapMap_\alpha = [\Psi_{\children{\alpha, 1}}, \ldots, \Psi_{\children{\alpha,
    |\children{\alpha}|}}].
\end{equation}
For $\varepsilon(\alpha) > 0$, let $\Psi_\alpha$, $\Lambda_\alpha$  be the linear mappings given by
\begin{equation*}
    \Psi_\alpha(e_n):= \sigma_n \cdot \varphi_n, \quad \Lambda_\alpha(e_n):= \lambda_n,
\end{equation*}
where $e_n$ is the $n$-th canonical basis vector of $\mathbb{R}^{N_\alpha}$ and $\sigma_n$, $\varphi_n$, $\lambda_n$
denote the $n$-th singular value, left singular vector and right singular vector of $\SnapMap_\alpha$.
Thus, $\Psi_\alpha \circ \Lambda_\alpha^*$ is the truncated SVD of $\SnapMap_\alpha$.
In particular, we have
\begin{equation}\label{eq:hapod_bound_truncated_svd}
    P_\alpha \circ \SnapMap_\alpha = \Psi_\alpha \circ \Lambda_\alpha^*, \qquad
    \Lambda_\alpha^* \circ \Lambda_\alpha = 1.
\end{equation}
    For $\varepsilon(\alpha) = 0$ (in which case $N_\alpha = |\Snap_\alpha|$), we simply let $\Psi_\alpha := \SnapMap_\alpha$ and let $\Lambda_\alpha$
    be the identity on $\R^{N_\alpha}$ such that \cref{eq:hapod_bound_truncated_svd} holds as well.

Note that since $\Snap_\alpha$ exactly consists of elements $\Psi_\beta(e_n)$ with $\beta \in \children{\alpha}$,
$1 \leq n \leq N_\beta$, it is clear that \cref{eq:hapod_bound_SnapMap_compatibility} can always be satisfied.

Finally, we define cumulative mappings
$\AccSnapMap_\alpha, \ProjSnapMap_\alpha: \R^{|\AccSnap_\alpha|} \to V$,
$\widetilde{\Lambda}_\alpha: \R^{N_\alpha} \to \R^{|\AccSnap_\alpha|}$ recursively as
\begin{equation*}
	\AccSnapMap_\alpha := \SnapMap_\alpha, \qquad \ProjSnapMap_\alpha:= \SnapMap_\alpha,
	\qquad \widetilde{\Lambda}_\alpha := \Lambda_\alpha,
\end{equation*}
for $\alpha \in \mathcal{L}_\T$ and
\begin{gather*}
	\AccSnapMap_\alpha := [\AccSnapMap_{\children{\alpha, 1}}, \ldots, \AccSnapMap_{\children{\alpha,
	|\children{\alpha}|}}], \\
	\widetilde{\Lambda}_\alpha := \operatorname{diag}(\widetilde{\Lambda}_{\children{\alpha, 1}},\ \ldots,\
	\widetilde{\Lambda}_{\children{\alpha, |\children{\alpha}|}})
	                              \circ \Lambda_\alpha,
		\\
		\ProjSnapMap_\alpha := [P_{\children{\alpha, 1}}\circ \ProjSnapMap_{\children{\alpha, 1}},\ \ldots,\ 
				P_{\children{\alpha, |\children{\alpha}|}} \circ \ProjSnapMap_{\children{\alpha,
			|\children{\alpha}|}}],
\end{gather*}
for all $\alpha \in \mathcal{N}_\T \setminus \mathcal{L}_\T$.
Similar to the definition of $\SnapMap_\alpha$, the map $\AccSnapMap_\alpha$ is of the
form \cref{eq:snapshot_map} with respect to a specific enumeration of $\AccSnap_\alpha$.
\end{definition}

	As a first step towards the proof of our main theorems, we will extend the decomposition \cref{eq:hapod_bound_truncated_svd} to
    the accumulated mapping of projected snapshots $\ProjSnapMap_\alpha$:

\begin{lemma}\label{thm:main_recursion_lemma}
	With the same notation as in \cref{def:hapod_notation} we have for all $\alpha \in \mathcal{N}_\T$:
	\begin{equation}\label{eq:hapod-bound-lemma}
		P_\alpha \circ \ProjSnapMap_\alpha = \Psi_\alpha \circ \widetilde{\Lambda}_\alpha^*, \qquad \widetilde{\Lambda}_\alpha^* \circ
		\widetilde{\Lambda}_\alpha = 1.
	\end{equation}
	In particular, it follows for $\alpha \in \mathcal{N}_\T \setminus \mathcal{L}_\T$ that
	\begin{equation}\label{eq:hapod-bound-lemma-b}
		  \ProjSnapMap_\alpha = \SnapMap_\alpha  \circ \operatorname{diag}(\widetilde{\Lambda}^*_{\children{\alpha, 1}},
						\ldots, \widetilde{\Lambda}^*_{\children{\alpha, |\children{\alpha}|}}).
	\end{equation}
\end{lemma}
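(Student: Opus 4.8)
The plan is to prove \cref{eq:hapod-bound-lemma} by induction on the level $L_\T(\alpha)$ of the node $\alpha$, and then derive \cref{eq:hapod-bound-lemma-b} as an algebraic consequence. For the base case, $\alpha \in \mathcal{L}_\T$: here $\ProjSnapMap_\alpha = \SnapMap_\alpha = \SnapMap_\alpha$ and $\widetilde{\Lambda}_\alpha = \Lambda_\alpha$ by definition, so \cref{eq:hapod-bound-lemma} reduces to \cref{eq:hapod_bound_truncated_svd}, which holds by \cref{def:hapod_notation} (both in the $\varepsilon(\alpha) > 0$ and $\varepsilon(\alpha) = 0$ cases).

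For the inductive step, fix $\alpha \in \mathcal{N}_\T \setminus \mathcal{L}_\T$ and assume \cref{eq:hapod-bound-lemma} holds for every child $\beta \in \children{\alpha}$ (all of which have strictly smaller level). Write $\beta_i := \children{\alpha,i}$ and $k := |\children{\alpha}|$ for brevity. I would first establish the orthogonality relation $\widetilde{\Lambda}_\alpha^* \circ \widetilde{\Lambda}_\alpha = 1$: by definition $\widetilde{\Lambda}_\alpha = \operatorname{diag}(\widetilde{\Lambda}_{\beta_1}, \ldots, \widetilde{\Lambda}_{\beta_k}) \circ \Lambda_\alpha$, so
\begin{equation*}
\widetilde{\Lambda}_\alpha^* \circ \widetilde{\Lambda}_\alpha = \Lambda_\alpha^* \circ \operatorname{diag}(\widetilde{\Lambda}_{\beta_1}^*\widetilde{\Lambda}_{\beta_1}, \ldots, \widetilde{\Lambda}_{\beta_k}^*\widetilde{\Lambda}_{\beta_k}) \circ \Lambda_\alpha = \Lambda_\alpha^* \circ \Lambda_\alpha = 1,
\end{equation*}
using the inductive hypothesis $\widetilde{\Lambda}_{\beta_i}^*\widetilde{\Lambda}_{\beta_i} = 1$ to collapse the block-diagonal operator to the identity, and then $\Lambda_\alpha^*\Lambda_\alpha = 1$ from \cref{eq:hapod_bound_truncated_svd}. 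Next, for the first identity in \cref{eq:hapod-bound-lemma}, I would compute $P_\alpha \circ \ProjSnapMap_\alpha$ column-block by column-block. By definition $\ProjSnapMap_\alpha = [P_{\beta_1} \circ \ProjSnapMap_{\beta_1}, \ldots, P_{\beta_k} \circ \ProjSnapMap_{\beta_k}]$, and by the inductive hypothesis applied to each $\beta_i$ this equals $[\Psi_{\beta_1} \circ \widetilde{\Lambda}_{\beta_1}^*, \ldots, \Psi_{\beta_k} \circ \widetilde{\Lambda}_{\beta_k}^*] = [\Psi_{\beta_1}, \ldots, \Psi_{\beta_k}] \circ \operatorname{diag}(\widetilde{\Lambda}_{\beta_1}^*, \ldots, \widetilde{\Lambda}_{\beta_k}^*)$. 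Now the compatibility relation \cref{eq:hapod_bound_SnapMap_compatibility} identifies $[\Psi_{\beta_1}, \ldots, \Psi_{\beta_k}] = \SnapMap_\alpha$, so $\ProjSnapMap_\alpha = \SnapMap_\alpha \circ \operatorname{diag}(\widetilde{\Lambda}_{\beta_1}^*, \ldots, \widetilde{\Lambda}_{\beta_k}^*)$ — this is exactly \cref{eq:hapod-bound-lemma-b}, so it falls out along the way. Finally, applying $P_\alpha$ and invoking \cref{eq:hapod_bound_truncated_svd} in the form $P_\alpha \circ \SnapMap_\alpha = \Psi_\alpha \circ \Lambda_\alpha^*$ gives
\begin{equation*}
P_\alpha \circ \ProjSnapMap_\alpha = \Psi_\alpha \circ \Lambda_\alpha^* \circ \operatorname{diag}(\widetilde{\Lambda}_{\beta_1}^*, \ldots, \widetilde{\Lambda}_{\beta_k}^*) = \Psi_\alpha \circ \bigl(\operatorname{diag}(\widetilde{\Lambda}_{\beta_1}, \ldots, \widetilde{\Lambda}_{\beta_k}) \circ \Lambda_\alpha\bigr)^* = \Psi_\alpha \circ \widetilde{\Lambda}_\alpha^*,
\end{equation*}
which completes the induction.

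The main subtlety — more bookkeeping than genuine obstacle — is keeping the block-matrix/operator notation honest: one must check that $P_\alpha$ commutes past nothing but is simply applied on the left of $\SnapMap_\alpha$, that $\operatorname{diag}(\cdots)^* = \operatorname{diag}(\cdots^*)$, and that the enumeration of $\Snap_\alpha$ chosen in \cref{def:hapod_notation} is precisely the one making \cref{eq:hapod_bound_SnapMap_compatibility} hold so that the concatenation $[\Psi_{\beta_1}, \ldots, \Psi_{\beta_k}]$ really is $\SnapMap_\alpha$ and not some permutation of it. I would also remark that the $\varepsilon(\alpha) = 0$ case needs no separate treatment here, since \cref{eq:hapod_bound_truncated_svd} was arranged to hold in that case too (with $\Psi_\alpha = \SnapMap_\alpha$, $\Lambda_\alpha = 1$).
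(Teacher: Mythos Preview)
Your proof is correct and follows essentially the same approach as the paper's: both argue by induction over the tree, reduce the leaf case to \cref{eq:hapod_bound_truncated_svd}, and in the inductive step use the induction hypothesis together with the compatibility relation \cref{eq:hapod_bound_SnapMap_compatibility} to rewrite $\ProjSnapMap_\alpha$ in terms of $\SnapMap_\alpha$, then apply \cref{eq:hapod_bound_truncated_svd} once more. The only cosmetic differences are that the paper proves the first identity of \cref{eq:hapod-bound-lemma} before the orthogonality relation and records \cref{eq:hapod-bound-lemma-b} as a separate final computation rather than as an intermediate byproduct.
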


\begin{proof}
We show the claim via induction over $\T$.
To this end, first note that for $\alpha \in \mathcal{L}_\T$,
\cref{eq:hapod-bound-lemma} is precisely \cref{eq:hapod_bound_truncated_svd} by definition of
$\ProjSnapMap_\alpha, \widetilde{\Lambda}_\alpha$.
For $\alpha \in \mathcal{N}_\T \setminus \mathcal{L}_\T$, we obtain using the induction hypothesis, the definition of
$\SnapMap_\alpha$ and \cref{eq:hapod_bound_truncated_svd}:
\begin{align*}
P_\alpha\circ\ProjSnapMap_\alpha &= P_\alpha \circ [P_{\children{\alpha, 1}}\circ \ProjSnapMap_{\children{\alpha, 1}},\ \ldots ,\ 
				P_{\children{\alpha, |\children{\alpha}|}} \circ \ProjSnapMap_{\children{\alpha,
			|\children{\alpha}|}}] \\
	&= P_\alpha \circ [\Psi_{\children{\alpha, 1}} \circ \widetilde{\Lambda}^*_{\children{\alpha, 1}}, \ldots ,
	\Psi_{\children{\alpha, |\children{\alpha}|}} \circ \widetilde{\Lambda}^*_{\children{\alpha, |\children{\alpha}|}}] \\
	&= P_\alpha \circ \SnapMap_\alpha \circ \operatorname{diag}(\widetilde{\Lambda}^*_{\children{\alpha, 1}},
	\ldots , \widetilde{\Lambda}^*_{\children{\alpha, |\children{\alpha}|}}) \\
	&= \Psi_\alpha \circ \Lambda_\alpha^* \circ \operatorname{diag}(\widetilde{\Lambda}^*_{\children{\alpha, 1}},
	\ldots , \widetilde{\Lambda}^*_{\children{\alpha, |\children{\alpha}|}}) \\
	&= \Psi_\alpha \circ \widetilde{\Lambda}_\alpha^*.
\end{align*}
Moreover:
\begin{equation*}
\widetilde{\Lambda}_\alpha^* \circ \widetilde{\Lambda}_\alpha
  = \Lambda_\alpha^* \circ \operatorname{diag}(\widetilde{\Lambda}_{\children{\alpha, 1}}^* \circ
	  \widetilde{\Lambda}_{\children{\alpha, 1}},\ \ldots,\
                               \widetilde{\Lambda}_{\children{\alpha, |\children{\alpha}|}}^* \circ
		       \widetilde{\Lambda}_{\children{\alpha, |\children{\alpha}|}}) \circ \Lambda_\alpha
			       = 1.
\end{equation*}
Thus, \cref{eq:hapod-bound-lemma} is proved, and we have
\begin{align*}
  \ProjSnapMap_\alpha &=
    [P_{\children{\alpha, 1}}\circ \ProjSnapMap_{\children{\alpha, 1}},\ \ldots,\ 
			P_{\children{\alpha, |\children{\alpha}|}} \circ \ProjSnapMap_{\children{\alpha,
		|\children{\alpha}|}}] \\
    &= [\Psi_{\children{\alpha, 1}} \circ \widetilde{\Lambda}_{\children{\alpha, 1}}^*, \ldots,
     \Psi_{\children{\alpha, |\children{\alpha}|}} \circ \widetilde{\Lambda}_{\children{\alpha, |\children{\alpha}|}}^*] \\
     &= \SnapMap_\alpha  \circ \operatorname{diag}(\widetilde{\Lambda}^*_{\children{\alpha, 1}},
	\ldots, \widetilde{\Lambda}^*_{\children{\alpha, |\children{\alpha}|}}).
\end{align*}
\end{proof}

As a final preparatory step, we show the following orthogonality lemma:

\begin{lemma}\label{thm:orthogonality_lemma}
	With the same notation as in \cref{def:hapod_notation} we have for all $\alpha \in \mathcal{N}_\T$
	and arbitrary continuous linear maps $X, Y: V \to V$:
	\begin{equation} \label{eq:hapod-bound-orthogonality}
		(X\circ(\AccSnapMap_\alpha - \ProjSnapMap_\alpha),\, Y\circ \ProjSnapMap_\alpha)_2 = 0,
	\end{equation}
	where $(A, B)_2$ is the Hilbert-Schmidt inner product given by $\Tr(A^*B)$.
\end{lemma}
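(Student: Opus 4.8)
The plan is to prove \cref{eq:hapod-bound-orthogonality} by induction over the tree $\T$, using the block decomposition of $\AccSnapMap_\alpha$ and $\ProjSnapMap_\alpha$ together with the fundamental orthogonality of the truncated SVD, namely that at each node the residual $\SnapMap_\alpha - P_\alpha\circ\SnapMap_\alpha$ has its image orthogonal to $\im(P_\alpha)$. First I would dispose of the base case $\alpha\in\mathcal{L}_\T$: there $\AccSnapMap_\alpha = \ProjSnapMap_\alpha = \SnapMap_\alpha$ by \cref{def:hapod_notation}, so the left-hand side of \cref{eq:hapod-bound-orthogonality} vanishes trivially.

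For the inductive step, let $\alpha\in\mathcal{N}_\T\setminus\mathcal{L}_\T$ with children $\beta_i := \children{\alpha,i}$. Writing both cumulative maps in their block form from \cref{def:hapod_notation}, the difference decomposes column-block-wise as
\begin{equation*}
  \AccSnapMap_\alpha - \ProjSnapMap_\alpha
    = \bigl[(1 - P_{\beta_1})\circ\ProjSnapMap_{\beta_1} + (\AccSnapMap_{\beta_1} - \ProjSnapMap_{\beta_1}),\ \ldots\bigr].
\end{equation*}
Since the Hilbert-Schmidt inner product of block-column maps splits as the sum over blocks, it suffices to show for each $i$ that
\begin{equation*}
  \bigl(X\circ\bigl[(1-P_{\beta_i})\circ\ProjSnapMap_{\beta_i} + (\AccSnapMap_{\beta_i}-\ProjSnapMap_{\beta_i})\bigr],\ Y\circ P_{\beta_i}\circ\ProjSnapMap_{\beta_i}\bigr)_2 = 0.
\end{equation*}
The second summand pairs to zero by the induction hypothesis applied at $\beta_i$ (with $Y$ replaced by $Y\circ P_{\beta_i}$, using that $P_{\beta_i}\circ\ProjSnapMap_{\beta_i}$ is exactly $\ProjSnapMap_{\beta_i}$ composed with a projection — more directly, $Y\circ P_{\beta_i}$ is again a continuous linear map and $P_{\beta_i}\circ\ProjSnapMap_{\beta_i}$ is the "projected" half, so this is an instance of the inductive claim after noting $Y\circ P_{\beta_i}\circ\ProjSnapMap_{\beta_i}$ differs from $Y'\circ\ProjSnapMap_{\beta_i}$ only through the continuous map $Y' = Y\circ P_{\beta_i}$). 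The first summand pairs to zero because $\im\bigl((1-P_{\beta_i})\circ\ProjSnapMap_{\beta_i}\bigr)\perp\im(P_{\beta_i})$, so after composing with the continuous maps $X$ and $Y$ the Hilbert-Schmidt trace $\Tr\bigl((X\circ(1-P_{\beta_i})\circ\ProjSnapMap_{\beta_i})^*\,Y\circ P_{\beta_i}\circ\ProjSnapMap_{\beta_i}\bigr)$ contains the factor $(1-P_{\beta_i})^*\,Y^*\cdots$ — I would make this precise by inserting $P_{\beta_i} = P_{\beta_i}^2$ and using $(1-P_{\beta_i})\circ P_{\beta_i} = 0$ together with self-adjointness of $P_{\beta_i}$, rewriting the trace so that an orthogonal projection and its complement meet.

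The main obstacle I anticipate is bookkeeping rather than conceptual: making the reduction to "one block at a time" fully rigorous requires care because the inductive hypothesis is stated for \emph{arbitrary} $X,Y$, and I must check that the maps I feed into it ($Y\circ P_{\beta_i}$, and on the $X$ side the various compositions) are genuinely of the allowed form, i.e.\ continuous linear maps $V\to V$. Since all the $P_{\beta_i}$ are orthogonal projections (hence continuous) and the original $X,Y$ are continuous, every map appearing is a finite composition of continuous maps and thus continuous, so this goes through; but writing it cleanly — and correctly identifying which half of each block gets the projection — is where the proof needs attention. A secondary point is that one should verify the block-wise splitting of $(\cdot,\cdot)_2$ over column blocks, which follows from $\Tr(A^*B) = \sum_i \Tr(A_i^* B_i)$ for block-column operators $A = [A_1,\ldots]$, $B = [B_1,\ldots]$; I would state this once and invoke it.
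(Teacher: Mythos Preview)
Your overall architecture---induction over $\T$, block-column splitting of the Hilbert--Schmidt inner product, and the decomposition of each block into $(\AccSnapMap_{\beta}-\ProjSnapMap_{\beta}) + (1-P_\beta)\circ\ProjSnapMap_\beta$---matches the paper exactly, and your handling of the $(\AccSnapMap_{\beta}-\ProjSnapMap_{\beta})$ summand via the induction hypothesis with $Y' = Y\circ P_\beta$ is correct.

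The gap is in the other summand. Range orthogonality $\im\bigl((1-P_\beta)\ProjSnapMap_\beta\bigr)\perp\im(P_\beta)$ does \emph{not} force $(X\circ A,\,Y\circ B)_2 = 0$ for arbitrary $X,Y$: take $V=\R^2$, $P=e_1e_1^*$, $R = e_1+e_2:\R^1\to\R^2$, $X$ the coordinate swap and $Y=I$; then $A=(1-P)R=e_2$, $B=PR=e_1$ have orthogonal images, yet $(XA,YB)_2 = \langle e_1,e_1\rangle = 1$. Your proposed fix (insert $P_\beta^2=P_\beta$ and use $(1-P_\beta)P_\beta=0$) cannot work directly because in $\Tr\bigl(\ProjSnapMap_\beta^*(1-P_\beta)X^*YP_\beta\ProjSnapMap_\beta\bigr)$ the factor $X^*Y$ separates $(1-P_\beta)$ from $P_\beta$, and neither cyclic permutation nor self-adjointness moves it out of the way. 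What is actually needed is \emph{domain-side} orthogonality: after cycling the trace to $\Tr\bigl(X^*Y\,P_\beta\ProjSnapMap_\beta\ProjSnapMap_\beta^*(1-P_\beta)\bigr)$, one must show $P_\beta\ProjSnapMap_\beta\ProjSnapMap_\beta^*(1-P_\beta)=0$, i.e.\ that $P_\beta$ commutes with $\ProjSnapMap_\beta\ProjSnapMap_\beta^*$. This is precisely where the paper invokes \cref{thm:main_recursion_lemma}: the factorization $\ProjSnapMap_\beta = \SnapMap_\beta\circ\operatorname{diag}(\widetilde{\Lambda}^*_{\cdots})$ together with $\widetilde{\Lambda}^*\widetilde{\Lambda}=1$ gives $\ProjSnapMap_\beta\ProjSnapMap_\beta^* = \SnapMap_\beta\SnapMap_\beta^*$, which commutes with $P_\beta$ since $P_\beta$ is a spectral projection of $\SnapMap_\beta\SnapMap_\beta^*$ (equivalently, the paper writes both pieces via right singular vectors and uses $\Lambda_\beta^*\Lambda_\beta^c=0$). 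Your proposal omits any appeal to \cref{thm:main_recursion_lemma}, and without it the step does not go through.
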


\begin{proof}
We prove the claim again via induction over $\T$. 	
For $\alpha \in \mathcal{L}_\T$ the statement is obvious since $\AccSnapMap_\alpha = \SnapMap_\alpha =
\ProjSnapMap_\alpha$.
For $\alpha \in \mathcal{N}_\T \setminus \mathcal{L}_\T$, we have
\begin{multline}\label{eq:hapod-bound-orthogonality-proof}
	(X\circ(\AccSnapMap_\alpha - \ProjSnapMap_\alpha),\, Y\circ \ProjSnapMap_\alpha)_2 \\
\begin{aligned}
	&= \sum_{\beta \in \mathcal{C}_\T(\alpha)} (X\circ(\AccSnapMap_\beta - P_\beta\circ
	\ProjSnapMap_\beta),\, Y\circ P_\beta \circ \ProjSnapMap_\beta)_2 \\
	&= \sum_{\beta \in \mathcal{C}_\T(\alpha)} (X\circ(\AccSnapMap_\beta - \ProjSnapMap_\beta),\,
Y\circ P_\beta\circ \ProjSnapMap_\beta)_2 \\ & \qquad\qquad\qquad\qquad+
	\sum_{\beta \in \mathcal{C}_\T(\alpha)} (X\circ (1 - P_\beta)\circ \ProjSnapMap_\beta,\,
	Y\circ P_\beta \circ\ProjSnapMap_\beta)_2.
\end{aligned}
\end{multline}
The first sum on the right-hand side of \cref{eq:hapod-bound-orthogonality-proof} vanishes by induction hypothesis
(with $Y:=Y \circ P_\beta$). 
To handle the second sum note that for $\beta \in \mathcal{N}_\T \setminus \mathcal{L}_\T$, $\varepsilon(\beta) > 0$ we can use \cref{eq:hapod-bound-lemma-b} to write:
\begin{align*}
(1 - P_\beta)\circ\ProjSnapMap_\beta &= (1 - P_\beta) \circ \SnapMap_\beta \circ
	\operatorname{diag}(\widetilde{\Lambda}^*_{\children{\beta, 1}},
	\ldots, \widetilde{\Lambda}^*_{\children{\beta, |\children{\beta}|}}) \\
	&= \Psi^c_\beta \circ \Lambda_\beta^{c*} \circ
	\operatorname{diag}(\widetilde{\Lambda}^*_{\children{\beta, 1}},
	\ldots, \widetilde{\Lambda}^*_{\children{\beta, |\children{\beta}|}}),
\end{align*}
where $\Psi_\beta^c: \mathbb{R}^{|\Snap_\beta| - N_\beta} \to V, \Lambda_\beta^c: \mathbb{R}^{|\Snap_\beta| -
N_\beta} \to \mathbb{R}^{|\Snap_\beta|}$ map the $k$-th canonical basis vector to the $(N_\beta + k)$-th
scaled left (unscaled right) singular vector of $\SnapMap_\beta$.
In particular, $\Lambda_\beta^* \circ \Lambda_\beta^c = 0$.
Using \cref{eq:hapod-bound-lemma} and the invariance of the trace under cyclic permutations, we obtain:
\begin{multline*}
(X\circ(1 - P_\beta)\circ\ProjSnapMap_\beta,\, Y\circ P_\beta\circ \ProjSnapMap_\beta)_2 \\
\begin{aligned}
	&= \Tr(\{(1 - P_\beta)\circ \ProjSnapMap_\beta\}^*\circ X^*\circ Y \circ P_\beta\circ \ProjSnapMap_\beta) \\
	&= \Tr(X^*\circ Y\circ P_\beta\circ \ProjSnapMap_\beta\circ \{(1 - P_\beta)\circ \ProjSnapMap_\beta\}^*) \\
	&= \Tr(X^*\circ Y \circ \Psi_\beta \circ \Lambda_\beta^* \circ
	\operatorname{diag}(\widetilde{\Lambda}^*_{\children{\beta, 1}},
	\ldots, \widetilde{\Lambda}^*_{\children{\beta, |\children{\beta}|}}) \\
	&\qquad\qquad\qquad\qquad\qquad\qquad\qquad \circ
	\operatorname{diag}(\widetilde{\Lambda}_{\children{\beta, 1}}, \ldots, \widetilde{\Lambda}_{\children{\beta,
|\children{\beta}|}}) \circ \Lambda_\beta^c \circ \Psi_\beta^{c*}) \\
&= \Tr(X^*\circ Y \circ \Psi_\beta \circ \{\Lambda_\beta^* \circ \Lambda_\beta^c\} \circ \Psi_\beta^c)
	= 0.
\end{aligned}
\end{multline*}
The same line of argument holds for $\beta \in \mathcal{L}_\T$, where we have $(1 - P_\beta)\circ
\ProjSnapMap_\beta = \Psi_\beta^c \circ \Lambda_\beta^{c*}$.
Since for $\varepsilon(\beta) = 0$ we trivially have $1 - P_\beta = 0$, we see that the second sum in \cref{eq:hapod-bound-orthogonality-proof} always vanishes, proving
the claim.
\end{proof}

\begin{proof}[Proof of \cref{thm:hapod_bound}]
First note that,
due to the best approximation property of the orthogonal projection $P_{\alpha}$ we have:
\begin{align*}
\sum_{s \in \AccSnap_\alpha} \|s - P_{\alpha}(s)\|^2  &=
 \sum_{n = 1}^{|\AccSnap_\alpha|} \|\AccSnapMap_{\alpha}(e_n) - P_{\alpha}(\AccSnapMap_{\alpha}(e_n))\|^2 \\
 &\leq \sum_{n = 1}^{|\AccSnap_\alpha|} \|\AccSnapMap_{\alpha}(e_n) - P_{\alpha}(\ProjSnapMap_{\alpha}(e_n))\|^2  \\
 &= \|\AccSnapMap_{\alpha} - P_{\alpha}\circ\ProjSnapMap_{\alpha}\|^2_2,
\end{align*}
where $\|A\|_2 = \sqrt{(A, A)_2} = \sqrt{\Tr(A^*A)}$ denotes the Hilbert-Schmidt norm of $A$.
Thus, the theorem is proven if we can show the that for all $\alpha \in \mathcal{N}_\T$ the
following estimate holds:
\begin{equation}\label{eq:hapod_error_estimate_in_proof}
\|\AccSnapMap_\alpha - P_\alpha\circ\ProjSnapMap_\alpha\|_2^2 \leq
\sum_{\gamma \in \mathcal{N}_\T(\alpha)} \varepsilon(\gamma)^2.
\end{equation}
We show \cref{eq:hapod_error_estimate_in_proof} again via induction over $\T$.
For $\alpha \in \mathcal{L}_\T$ we immediately have:
\begin{equation*}
 \|\AccSnapMap_\alpha - P_\alpha\circ\ProjSnapMap_\alpha\|_2^2
 = \| \SnapMap_\alpha - P_\alpha \circ \SnapMap_\alpha \|_2^2 \leq \varepsilon(\alpha)^2 =
 \sum_{\gamma \in \mathcal{N}_\T(\alpha)} \varepsilon(\gamma)^2,
\end{equation*}
according to \cref{def:pod}.

Now, let us assume that \cref{eq:hapod_error_estimate_in_proof} holds for all $\beta \in
\children{\alpha}$ for some $\alpha \in \mathcal{N}_\T \setminus \mathcal{L}_\T$.
Using \cref{thm:orthogonality_lemma} with $Y = I - P_\alpha$, we have 
\begin{equation*}
\|\AccSnapMap_\alpha - P_\alpha\circ\ProjSnapMap_\alpha\|_2^2
= \| \AccSnapMap_\alpha - \ProjSnapMap_\alpha + (I - P_\alpha) \circ
\ProjSnapMap_\alpha \|_2^2 = \| \AccSnapMap_\alpha - \ProjSnapMap_\alpha \|_2^2 + \| (I - P_\alpha) \circ
\ProjSnapMap_\alpha \|_2^2.
\end{equation*}
Using the induction hypothesis, we can bound the first summand by:
\begin{equation*}
\begin{aligned}
\| \AccSnapMap_\alpha - \ProjSnapMap_\alpha \|_2^2
&= \sum_{\beta \in \children{\alpha}} \|\AccSnapMap_\beta - P_\beta\circ\ProjSnapMap_\beta\|_2^2 \\
&\leq \sum_{\beta \in \children{\alpha}} \sum_{\gamma \in \mathcal{N}_\T(\beta)} \varepsilon(\gamma)^2 \\
&= \sum_{\gamma \in \mathcal{N}_\T(\alpha) \setminus \{\alpha\}} \varepsilon(\gamma)^2.
\end{aligned}
\end{equation*}

To bound the second summand, we use \cref{thm:main_recursion_lemma}, the fact that $\|T\circ S\|_2 \leq \|T\|_2\cdot\|S\|$
(for arbitrary $T$, $S$) and \cref{def:pod} to obtain:
\begin{align*}
\| (I - P_\alpha) \circ \ProjSnapMap_\alpha \|_2^2
   &=\| (I - P_\alpha) \circ \SnapMap_\alpha \circ \operatorname{diag}(\widetilde{\Lambda}^*_{\children{\alpha, 1}},
	\ldots, \widetilde{\Lambda}^*_{\children{\alpha, |\children{\alpha}|}})\|_2^2 \\
 &\leq\| (I - P_\alpha) \circ \SnapMap_\alpha \|_2^2 \cdot \|\operatorname{diag}(\widetilde{\Lambda}^*_{\children{\alpha, 1}}, 
	\ldots, \widetilde{\Lambda}^*_{\children{\alpha, |\children{\alpha}|}})\|^2 \\
	&\leq \varepsilon(\alpha)^2.
\end{align*}
Thus, \cref{eq:hapod_error_estimate_in_proof} follows, which completes the proof.
\end{proof}

\begin{proof}[Proof of \cref{thm:mode_bound}]
For $\alpha \in \mathcal{L}_\T$ there is nothing to show, so let us assume that
$\alpha \in \mathcal{N}_\T \setminus \mathcal{L}_\T$.
According to \cref{thm:main_recursion_lemma},
$\ProjSnapMap_\alpha$ and $\SnapMap_\alpha$ have the same singular values.
Thus, with $\ProjSnap_{\alpha}:=\{\ProjSnapMap_\alpha(e_n)\ |\ 1 \leq n \leq
|\AccSnap_\alpha|\}$ we have:
\begin{equation*}
	|\HAPODSTDE(\alpha)| = |\POD(\Snap_\alpha, \varepsilon(\alpha))|
	= |\POD(\ProjSnap_\alpha, \varepsilon(\alpha))|.
\end{equation*}
Let $\widetilde{P}_\alpha$ be the orthogonal projection onto the linear span of the modes selected by
$\POD(\AccSnap_\alpha, \varepsilon(\alpha))$.
Due to \cref{thm:orthogonality_lemma} with $X = Y = 1 - \widetilde{P}_\alpha$, we have:
\begin{align*}
	\varepsilon(\alpha)^2
	&\geq \|(1 - \widetilde{P}_\alpha)\circ \AccSnapMap_\alpha\|_2^2 \\
	 &= \|(1 - \widetilde{P}_\alpha)\circ \ProjSnapMap_\alpha \|_2^2 + 
	    \|(1 - \widetilde{P}_\alpha)\circ (\AccSnapMap_\alpha - \ProjSnapMap_\alpha) \|_2^2 \\
	 &\geq \|(1 - \widetilde{P}_\alpha)\circ \ProjSnapMap_\alpha\|_2^2.
\end{align*}
According to \cref{def:pod} and due to the optimality of the POD we therefore have
\begin{equation*}
	|\POD(\ProjSnap_\alpha, \varepsilon(\alpha))| \leq
	|\POD(\AccSnap_\alpha, \varepsilon(\alpha))|,
\end{equation*}
which concludes the proof.
\end{proof}

\begin{proof}[Proof of \cref{cor:hapod_bound}]
   According to \cref{thm:hapod_bound} we have 
\begin{align*}
    \sum_{s \in \Snap} \|s - P_{\rho_\T}(s)\|^2 &
    \leq
        |\Snap|\cdot \omega^2 \cdot \varepsilon^{*2} +
        \sum_{l=1}^{L_\T - 1}\sum_{\substack{\gamma \in \mathcal{N}_\T\\L_\T(\gamma) = l}} |\AccSnap_\gamma|\cdot {(L_\T - 1)}^{-1}\cdot (1 -
        \omega^2)\cdot \varepsilon^{*2}\\
    &\leq
        |\Snap|\cdot \omega^2 \cdot \varepsilon^{*2} +
        \sum_{l=1}^{L_\T - 1} |\Snap|\cdot {(L_\T - 1)}^{-1}\cdot (1 -
        \omega^2)\cdot \varepsilon^{*2}\\
    &= |\Snap|\cdot \varepsilon^{*2}.
\end{align*}
The stated bounds for the number of HAPOD modes follow directly from \cref{thm:mode_bound} and the
definition of $\MPOD$.
\end{proof}

\section{Numerical Results}\label{sc:numex}
To demonstrate the applicability of the HAPOD, three numerical examples comparing the POD with the HAPOD are presented and evaluated in terms of accuracy and complexity.
The first two experiments are implemented in the Matlab language and performed using Octave \cite{octave}.
For the POD and HAPOD\footnote{Internally the HAPOD implementation uses the same POD method as the plain POD.}, the built-in SVD of Octave is utilized,
which in turn uses LAPACK \cite{lapack}.
The third experiment is implemented in Python using the POD implementation of the \mbox{pyMOR} library \cite{pymor},
which utilizes the method of snapshots by SciPy's \cite{scipy} symmetric eigenvalue computation, also via LAPACK.

\subsection{Incremental Data Compression} \label{subsc:livecompr}
\begin{figure}[t] \centering

 \begin{subfigure}[t]{.48\textwidth}

\begin{tikzpicture}
  \begin{loglogaxis}
    [max space between ticks=70cm,
     line width=1pt,
     major tick length=2mm,
     every axis plot/.style={line width=1pt},
     every tick/.append style={line width=1pt},
     mark size=2pt,
     x dir=reverse,
     xtick={1.0,0.1,0.01,0.001},
     ytick={1.0,0.1,0.01,0.001},
     xlabel={Prescribed Mean Proj. Error},
     ylabel={Mean Projection Error}]
    \addplot+ [mark options={}] table[x index=0, y index=1] {numex1a.dat}; 
    \addplot+ [mark options={}] table[x index=0, y index=2] {numex1a.dat}; 
    \addplot [color=black] coordinates {(1,1) (0.1,0.1) (0.01,0.01) (0.001,0.001)};
    \legend{POD, HAPOD, $\varepsilon^*$}
  \end{loglogaxis}
\end{tikzpicture}

\caption{Actual $\ell^2$-mean projection error of POD and incremental HAPOD computation for prescribed errors $\varepsilon^*$.}
  \label{fig:numex1a1}
 \end{subfigure}
 \hfill
 \begin{subfigure}[t]{.48\textwidth}

\begin{tikzpicture}
  \begin{semilogxaxis}
    [legend pos=south east,
     max space between ticks=70cm,
     line width=1pt,
     major tick length=2mm,
     every axis plot/.style={line width=1pt},
     every tick/.append style={line width=1pt},
     mark size=2pt,
     x dir=reverse,
     xtick={1.0,0.1,0.01,0.001},
     ytick={10,20,30,40,50,60,70,80,90,100},
     xlabel={Prescribed Mean Proj. Error},
     ylabel={Number of Modes}]
    \addplot+ [mark options={}] table[x index=0, y index=1] {numex1b.dat}; 
    \addplot+ [mark options={}] table[x index=0, y index=2] {numex1b.dat}; 
    \addplot+ table[x index=0, y index=4] {numex1b.dat};
    \addplot+ [mark options={}] table[x index=0, y index=5] {numex1b.dat};
    \legend{POD, HAPOD, Bound, Intermed.}
  \end{semilogxaxis}
\end{tikzpicture}

\caption{Number of resulting POD and HAPOD modes, bound \cref{eq:global_bounds} for
	 number of HAPOD modes at output node $\rho_\T$
	 and maximum number of intermediate HAPOD output modes \cref{eq:local_bound}.}
  \label{fig:numex1a2}
 \end{subfigure}

 \caption{Approximation error and mode counts vs.\ prescribed error tolerance 
 for the data compression example with state-space dimension $N=500$ (cf.\ \cref{subsc:livecompr}).}
 \label{fig:numex1a}
\end{figure}
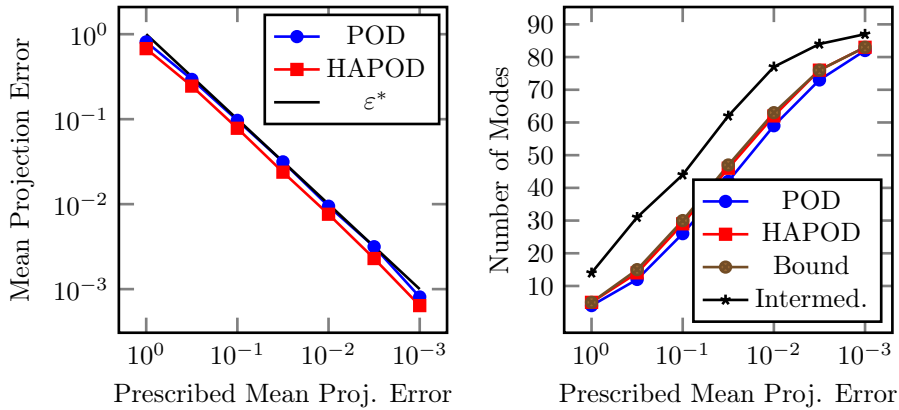

\begin{figure}[t] \centering

 \begin{subfigure}[t]{.48\textwidth}
 \centering
  \includegraphics[width=0.95\textwidth]{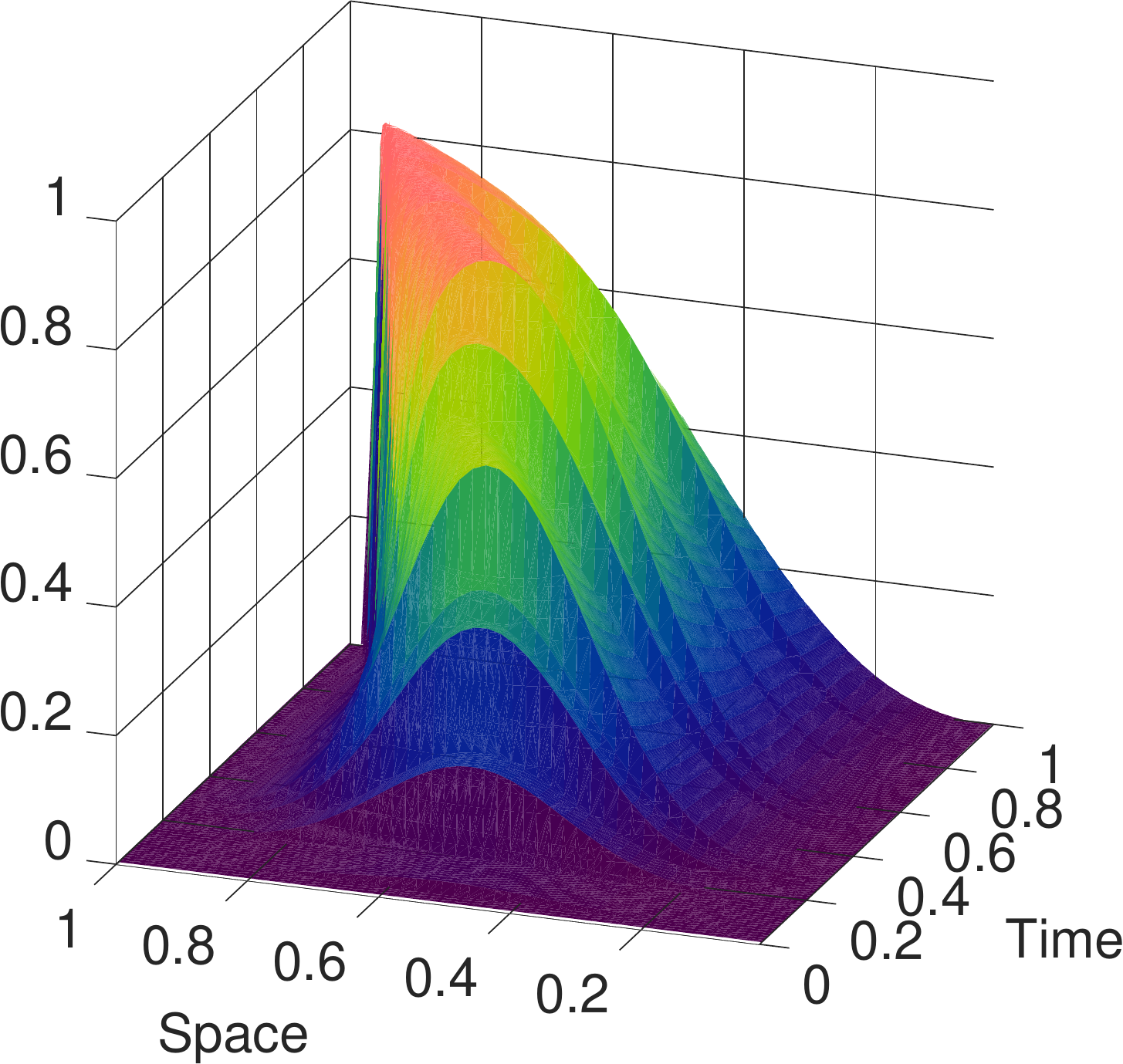}
  \caption{Visualization of the temporal evolution of the Burgers equation example.}
  \label{fig:numex1b1}
 \end{subfigure}
\hfill
 \begin{subfigure}[t]{.48\textwidth}
\begin{tikzpicture}
  \begin{semilogxaxis}
    [legend style={at={(0.97,0.5)},anchor=east},
     max space between ticks=70cm,
     line width=1pt,
     major tick length=2mm,
     every axis plot/.style={line width=1pt},
     every tick/.append style={line width=1pt},
     mark size=2pt,
     x dir=reverse,
     xtick={1.0,0.1,0.01,0.001},
     ytick={200,400,600,800},
     xlabel={Prescribed Mean Proj. Error},
     ylabel={Computational Time [s]}]
    \addplot+ [mark options={}] table[x index=0, y index=1] {numex1c.dat}; 
    \addplot+ [mark options={}] table[x index=0, y index=2] {numex1c.dat}; 
    \legend{POD, HAPOD}
  \end{semilogxaxis}
\end{tikzpicture}

  \caption{Computational time for POD and incremental HAPOD with state-space dimension $N=500$.}
  \label{fig:numex1c1}
 \end{subfigure}

 \caption{Solution visualization and computational time vs.\ prescribed error for the data compression example
 (cf.~\cref{subsc:livecompr}).}
 \label{fig:numex1b}
\end{figure}

The first numerical experiment compares the POD and HAPOD through compressing a trajectory of a randomly excited system.
As an underlying system, a forced one-dimensional inviscid Burgers equation is chosen:
\begin{align*}
 \partial_t z(x,t) + z(x,t) \cdot \partial_x z(x,t) &= b(x, t), &\hspace*{-1cm}(x, t) &\in (0,1) \times (0,1), \\
 z(x,0) &= 0, &\hspace*{-1cm} x &\in [0, 1],\\
 z(0,t) &= 0, &\hspace*{-1cm} t &\in [0, 1], 
\end{align*}
with force term $b \in L^2([0,1] \times [0,1])$.
A spatial discretization using a conservative finite difference upwind scheme with $N=500$ equidistant nodes yields a system of nonlinear ordinary differential equations in time \cite{Lev90}:
\begin{align*}
 \dot{z}(t) = A (z(t) \circ z(t)) + B u(t),
\end{align*}
with $\circ$ denoting the element-wise Hadamard product.
The experiment runs with constant temporal resolution $h = 10^{-4}$ resulting in $10^4$ explicit Euler time steps.
As forcing term, a scaled Gaussian bell curve $b(x,t) = u(t) \exp(-\frac{1}{20}(x-\frac{1}{2})^2)$ is chosen
with a time-dependent coefficient $u(t)$ which is $99.9\%$ of all time steps zero, but at random instances over the
whole time interval for $0.1\%$ of all time steps a constant value sampled from the uniform random distribution in the
interval $[0,\frac{1}{5}]$.
The full order model evolution is visualized in \cref{fig:numex1b1}.

An incremental HAPOD is performed as described in \cref{sc:special_cases}
to extract the dominant modes for different accuracies on a subdivision of the full time series into one-hundred uniform length blocks, 
of which results are compared to a POD over the whole time series.
The local error tolerances $\varepsilon$ are chosen according to \cref{cor:hapod_bound}
with $\omega = 0.75$. 
The computation is conducted on a Raspberry Pi\footnote{Rasperry Pi Model 1B: ARMv6-CPU $700$MHz, \textbf{512MB RAM}, see also: \mbox{\url{http://www.raspberrypi.org/products/model-b}}.}
single board computer device, which is a memory limited device, comparable to embedded or power-aware environments.

In \cref{fig:numex1a1}, the $\ell^2$-mean projection error \cref{eq:global_bounds} for the prescribed accuracies of
$\varepsilon^* \in \{10^0,10^{-1/2},10^{-1},\dots,10^{-3}\}$ is depicted.
Due to shock formation in the solution, a relatively large number of POD modes is required for accurate
approximation. 
Thus, in view of the low spatial resolution,
the prescribed errors are chosen in a manner to suppress effects of the discretization error in the results.
The approximation error of the POD and the incremental HAPOD decay very similarly in rate and magnitude.
In terms of the number of modes, \cref{fig:numex1a2} shows that also the number of final HAPOD modes increases with the same rate as the classic POD.
The HAPOD requires at most four additional modes,
and the mode bound \eqref{eq:global_bounds} overestimates the number of HAPOD modes by at most one. At most $15$ additional output modes
are generated at the intermediate HAPOD steps.
\begin{figure}[t] \centering

\begin{subfigure}[t]{.48\textwidth}

\begin{tikzpicture}
  \begin{semilogyaxis}
    [legend pos=south east,
     max space between ticks=70cm,
     line width=1pt,
     major tick length=2mm,
     every axis plot/.style={line width=1pt},
     every tick/.append style={line width=1pt},
     mark size=2pt,
     ytick={10,100,1000,10000},
     xlabel={State Dimension},
     ylabel={Computational Time [s]}]
    \addplot+ [mark options={}] table[x index=0, y index=1] {numex1z.dat}; 
    \addplot+ [mark options={}] table[x index=0, y index=2] {numex1z.dat}; 
    \legend{POD, HAPOD}
  \end{semilogyaxis}
\end{tikzpicture}

  \caption{Computational time for POD and incremental HAPOD with a prescribed error $\varepsilon^*=10^{-3/2}$ vs.\ different state-space
    dimensions.}
  \label{fig:numex1c2}
 \end{subfigure}
\hfill
 \begin{subfigure}[t]{.48\textwidth}
\begin{tikzpicture}
    \begin{loglogaxis}[at={(0.,0.)},
           anchor=east,
           line width=1pt,
           major tick length=2mm,
           every axis plot/.style={line width=1pt},
           every tick/.append style={line width=1pt},
           every y tick/.style={red},
           every y tick label/.append style = {red},
           mark size=2pt,
           axis y line*=right,
           ylabel near ticks, yticklabel pos=right,
           y axis line style={red,thick},
           xlabel={Block Size},
           ylabel={\textcolor{red}{Time [s]}},
           ylabel shift = -20pt,
           xtick={10,100,1000},
           ytick={100,1000},
           xshift=-50pt,
           yshift=0pt]

      \addplot+ [mark options={}, red, mark=x] table[x index=0, y index=3] {numex1y.dat}; \label{pl_time}
     \end{loglogaxis}
    \begin{semilogxaxis}[at={(0.,0.)},
           anchor=east,
           legend style={at={(0.03,0.52)},anchor=west,fill=none,draw=none},
           line width=1pt,
           major tick length=2mm,
           every axis plot/.style={line width=1pt},
           every tick/.append style={line width=1pt},
           every y tick/.style={blue},
           every y tick label/.append style = {blue},
           mark size=2pt,
           xshift=-50pt,
           yshift=0pt,
           xlabel={},
           ylabel={\textcolor{blue}{Number of Modes}},
           ylabel near ticks,
           y axis line style={blue,thick},
           axis x line=none,
           axis y line*=none
      ]
      \addlegendimage{/pgfplots/refstyle=pl_time}\addlegendentry{Runtime}
      \addplot+ [mark options={}, blue, mark=*] table[x index=0, y index=1] {numex1y.dat}; 
      \addplot+ [mark options={}, blue, mark=square*] table[x index=0, y index=2] {numex1y.dat}; 
      \addlegendentry{Final}
      \addlegendentry{Intermed.}
     \end{semilogxaxis}
\end{tikzpicture}

  \caption{Number of final HAPOD modes and maximum number of intermediate modes as well as computational time
  for varying input data block sizes,
  $\varepsilon^* = 10^{-3/2}$ and $N = 500$.}
  \label{fig:numex1b2}
 \end{subfigure}

 \caption{Computational time and mode number vs.\ state dimension and block size (the number of snapshots in a leaf node)
 for the data compression example (cf.\ \cref{subsc:livecompr}).}
 \label{fig:numex1c}
\end{figure}
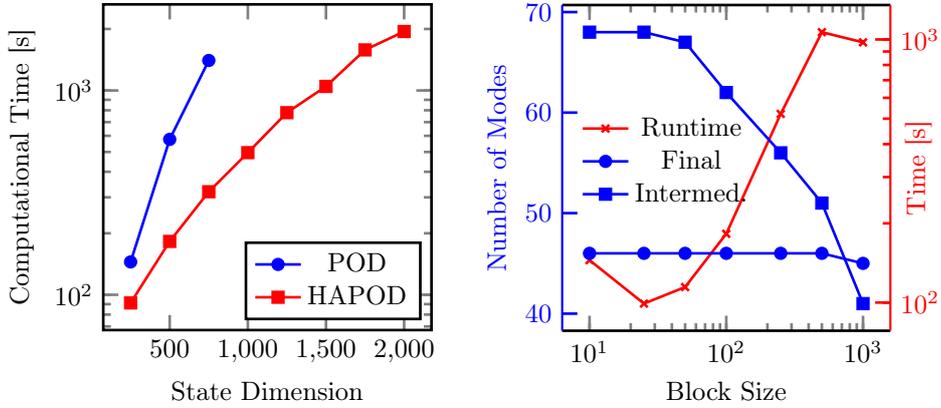

The time consumption is plotted in \cref{fig:numex1c1} for the different $\varepsilon^*$.
Since the used POD implementation fully factorizes the given input data,
the required computational time for the POD is (almost) constant for different accuracies.
The incremental HAPOD time requirements increase with higher accuracies,
yet for all tested $\varepsilon^*$ the HAPOD requires less time than the POD.
\cref{fig:numex1c2} shows the computational time for the POD and incremental HAPOD for varying state-space dimension \linebreak\mbox{$N=\{250, 500, 750, 1000, 1250, 1500, 1750, 2000\}$}, but fixed prescribed approximation error. 
For $N > 750$ the regular POD's memory requirements exceed the device capabilities, while the incremental HAPOD is still computable.

Furthermore, the dependence of the number of final HAPOD modes and intermediate modes together with the required
computational time is compared for varying block sizes in \cref{fig:numex1b2}. 
While the number of final modes stays almost constant, a smaller block size reduces the
computational time at the expense of a slightly larger number of intermediate modes.
This demonstrates the HAPOD's configurable trade-off between memory and computation time:
One can reduce the computational time by using smaller data partitions,
but has to take into account higher memory consumption for the intermediate modes;
on the other hand by enlarging the block partition size, less memory is consumed during the computation,
yet the computational time is increased.

\subsection{Distributed Empirical Cross Gramian}\label{sc:gramian}
The second numerical experiment compares the POD with the distributed HAPOD computation (cf.\ \cref{sc:special_cases})
in terms of the model reduction error resulting from the respective output modes.
Given a linear state-space control system with the same number of inputs and outputs $\dim(u(t)) = \dim(y(t))$,
\begin{align}\label{eq:linsys}
\begin{split}
 \dot{x}(t) &= A x(t) + B u(t), \\
       y(t) &= C x(t),
\end{split}
\end{align}
the associated cross Gramian matrix \cite{fernando83a} is defined as the composition of the system's controllability and observability operators:

\begin{figure}[t] \centering

 \begin{subfigure}[t]{.48\textwidth}

\begin{tikzpicture}
  \begin{loglogaxis}
    [max space between ticks=70cm,
     line width=1pt,
     major tick length=2mm,
     every axis plot/.style={line width=1pt},
     every tick/.append style={line width=1pt},
     mark size=2pt,
     x dir=reverse,
     xtick={0.01,0.0001,0.000001,0.00000001,0.0000000001},
     xlabel={Prescribed Mean Proj. Error},
     ylabel={Model Reduction Error}]
    \addplot+ [mark options={}] table[x index=0, y index=1] {numex2b.dat}; 
    \addplot+ [mark options={}] table[x index=0, y index=2] {numex2b.dat}; 
    \legend{POD, HAPOD}
  \end{loglogaxis}
\end{tikzpicture}

  \caption{Actual model reduction output $\ell^2$-error of POD and distributed HAPOD for prescribed errors $\varepsilon^*$.}
  \label{fig:numex2a1}
 \end{subfigure}
~
 \begin{subfigure}[t]{.48\textwidth}

\begin{tikzpicture}
  \begin{loglogaxis}
    [legend style={at={(0.97,0.65)},anchor=east},
     max space between ticks=70cm,
     line width=1pt,
     major tick length=2mm,
     every axis plot/.style={line width=1pt},
     every tick/.append style={line width=1pt},
     mark size=2pt,
     x dir=reverse,
     xtick={0.01,0.0001,0.000001,0.00000001,0.0000000001,1e-8,1e-10},
     ytick={0.1,1.0,10.0,100.0,1000.0},
     xlabel={Prescribed Mean Proj. Error},
     ylabel={Computational Time [s]}]
    \addplot+ [mark options={}] table[x index=0, y index=1] {numex2d.dat}; 
    \addplot+ [mark options={}] table[x index=0, y index=2] {numex2d.dat};
    \addplot+ [mark options={}] table[x index=0, y index=3] {numex2d.dat}; 
    \legend{POD, HAPOD, Minimal}
  \end{loglogaxis}
\end{tikzpicture}

\caption{Computational time for POD, distributed HAPOD time (sequential computation)
        and minimal required HAPOD time if full parallelization is assumed.}
  \label{fig:numex2a2}
 \end{subfigure}

 \caption{Comparison of model reduction error and computational time for the POD and distributed HAPOD computation
 for the distributed empirical cross Gramian example (cf.\ \cref{sc:gramian}) for varying prescribed projection error.}
 \label{fig:numex2a}
\end{figure}
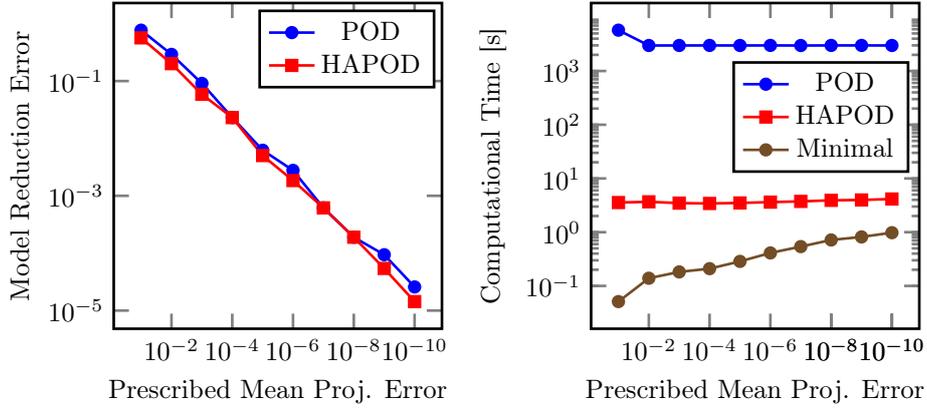

\begin{align*}
 W_X := \mathcal{CO} = \int_0^\infty \operatorname{e}^{At} BC \operatorname{e}^{At} \operatorname{d}\!t.
\end{align*}
The modes $U$ resulting from a POD of the cross Gramian constitute an approximate balancing transformation,
which can be truncated based on the associated singular values:
\begin{align*}
 W_X \stackrel{\operatorname{SVD}}{=} U D V \rightarrow U = \begin{pmatrix} U_1 & U_2 \end{pmatrix}.
\end{align*}
This truncated orthogonal projection induces a reduced order model for \eqref{eq:linsys},
\begin{align}\label{eq:gramian_reduced}
	\begin{split}
		\dot{x}_r(t) &= (U_1^\intercal A U_1) x_r(t) + (U_1^\intercal B) u(t), \\
		y_r(t) &= (C U_1) x_r(t).
	\end{split}
\end{align}
For further details we refer to \cite{sorensen02}. 
Practically, the empirical cross Gramian \cite{himpe14a} can be utilized for the computation of the cross Gramian:
\begin{align*}
     \widehat{W}_X &:= \sum_{m=1}^M \int_0^\infty \Psi^{m}(t) \operatorname{d}\!t \in \mathbb{R}^{N \times N}, \\
 \Psi^{m}_{ij}(t) &:= \langle x^m_i(t),y^j_m(t) \rangle,
\end{align*}
with $x^m(t)$ being the state trajectory for a perturbation of the $m$-th component of an impulse input and $y^j(t)$ the output trajectory for a perturbation of the $j$-th initial state component.
The empirical cross Gramian matrix may be assembled column-wise,
\begin{align}\label{eq:pwx}
\begin{split}
  \widehat{W}_X &= \Big[ \sum_{m=1}^M \int_0^\infty \psi^{m1}(t) \operatorname{d}\!t, \dots, \sum_{m=1}^M \int_0^\infty \psi^{mN}(t) \operatorname{d}\!t \Big], \\
 \psi^{mn}_i(t) &:= \langle x^m_i(t), y^n_m(t) \rangle,
\end{split}
\end{align}
by sorting the $\Psi^m(t)$ into columns.
This \emph{distributed empirical cross Gramian} together with the distributed HAPOD computation then allows a fully parallel assembly of the cross-Gramian-based approximate balancing truncated projection $U_1$.

This experiment utilizes the procedural ``Synthetic'' benchmark model\footnote{See: \url{http://modelreduction.org/index.php/Synthetic_parametric_model}} from \cite{morwiki}.
For $N = 10000$ a single-input-single-output system is generated, and we fix the parametrization to $\theta \equiv \frac{1}{10}$.
The system is excited by an impulse input $u(t) = \delta(t)$ and evolves over a time span of $T = [0,1]$ with a fixed time step width of $h = \frac{1}{100}$.
An empirical cross Gramian $\widehat{W}_X$ is computed\footnote{Computation on Intel Core i7-6700 (x86-64) CPU with $8$GB RAM.}
using \texttt{emgr}~--~empirical Gramian framework \cite{himpe13a,morHim16,emgr},
for which a regular POD and a distributed HAPOD is used to determine the left singular vectors.
For the latter, the empirical cross Gramian $\widehat{W}_X \in \R^{10000 \times 10000}$ is partitioned column-wise into $100$ blocks of size $10000 \times 100$,
which are assigned to the leafs of the distributed HAPOD tree,
and the local error tolerances chosen according to \cref{cor:hapod_bound} with $\omega = 0.5$.

\cref{fig:numex2a1} shows the error for the empirical cross Gramian-based state-space reduction comparing the original system's output and the reduced order model's output utilizing either the POD or the distributed variant of the HAPOD.
For a varying prescribed projection error, the model reduction error resulting from the POD and HAPOD,
i.e.\ the time-domain misfit between original system output and reduced-order system output measured in the $\ell^2$-norm $\varepsilon_y = \|y-y_r\|_{\ell^2}$,
decays with a similar rate as, and never exceeds the error resulting from the classic POD.

Comparing the time consumption of the POD and HAPOD, the former, due to its constant complexity, requires a fixed amount of time for each prescribed error.
The HAPOD assembly time is about three orders of magnitude smaller than for the POD and increases slowly for more accurate approximations, as shown in \cref{fig:numex2a2}.
Furthermore, if enough processor cores would be available for a full parallelization, meaning all leaf sub-PODs could be evaluated concurrently,
then for $\varepsilon^* \geq 10^{-6}$ the time requirements can be reduced again by up to one order of magnitude compared
with the single worker setup used in the experiment.
For smaller prescribed errors, the final POD starts to require a large part of the computational effort such that a balanced
tree $\T$ with depth $L_\T = 3$ would be required to gain an additional speedup.

\begin{figure}[t] \centering

 \begin{subfigure}[t]{.48\textwidth}

\begin{tikzpicture}
  \begin{loglogaxis}
    [max space between ticks=70cm,
     line width=1pt,
     major tick length=2mm,
     every axis plot/.style={line width=1pt},
     every tick/.append style={line width=1pt},
     mark size=2pt,
     ymin=1e1,
     ymax=1e5,
     ytick={1e1,1e2,1e3,1e4,1e5},
     xlabel={Block Size},
     ylabel={Speedup}]
    \addplot+ [mark options={}] table[x index=0, y expr={2980 / \thisrowno{1}}] {numex2c.dat}; 
    \addplot+ [mark options={}] table[x index=0, y expr={2980 / \thisrowno{2}}] {numex2c.dat}; 
    \addplot+ [mark options={}] table[x index=0, y expr={2980 / \thisrowno{3}}] {numex2c.dat}; 
    \addplot+ [mark options={}] table[x index=0, y expr={2980 / \thisrowno{4}}] {numex2c.dat}; 
    \legend{$L_\T = 2$, $L_\T = 3$, $L_\T = 4$, $L_\T = 5$}
  \end{loglogaxis}
\end{tikzpicture}

  \caption{Sequential runtime of the HAPOD.}
  \label{fig:numex2c1}
 \end{subfigure}
~
 \begin{subfigure}[t]{.48\textwidth}

\begin{tikzpicture}
  \begin{loglogaxis}
    [legend style={at={(0.83,0.25)},anchor=east},
     max space between ticks=70cm,
     line width=1pt,
     major tick length=2mm,
     every axis plot/.style={line width=1pt},
     every tick/.append style={line width=1pt},
     mark size=2pt,
     ymin=1e1,
     ymax=1e5,
     ytick={1e1,1e2,1e3,1e4,1e5},
     xlabel={Block Size},
     ylabel={Speedup}]
      \addplot+ [mark options={}] table[x index=0, y expr={2980 / \thisrowno{5}}] {numex2c.dat}; 
      \addplot+ [mark options={}] table[x index=0, y expr={2980 / \thisrowno{6}}] {numex2c.dat}; 
      \addplot+ [mark options={}] table[x index=0, y expr={2980 / \thisrowno{7}}] {numex2c.dat}; 
      \addplot+ [mark options={}] table[x index=0, y expr={2980 / \thisrowno{8}}] {numex2c.dat}; 
    \legend{$L_\T = 2$, $L_\T = 3$, $L_\T = 4$, $L_\T = 5$}
  \end{loglogaxis}
\end{tikzpicture}

\caption{Maximal speedup of the HAPOD assuming full parallelization.}
  \label{fig:numex2c2}
 \end{subfigure}

 \caption{Speedup of the HAPOD for balanced trees of different depth and block sizes (cf.\ \cref{sc:gramian}),
 $\varepsilon^* = 10^{-6}$, in
 comparison to the classic POD. The runtime for the classic POD is $2.98 \cdot 10^3$ seconds.}
 \label{fig:numex2c}
\end{figure}
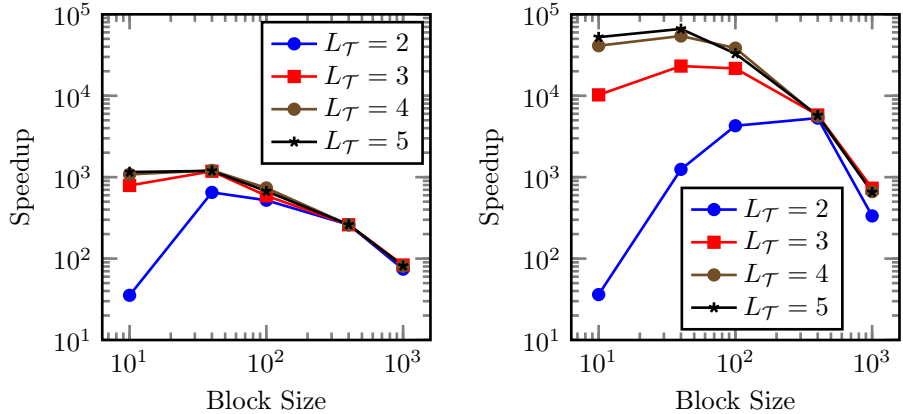

The next experiment tests the influence of the depth of the tree and the block size at the leafs on the runtime.
To this end the $10^4$ columns of the empirical cross Gramian are organized in partitions of
$10 \times 1000$, $40 \times 250$, $100 \times 100$, $400 \times 25$ and $1000 \times 10$ columns.
These partitions are each mapped to the leafs of balanced $n$-ary trees of depth $L_\T \in \{2,3,4,5\}$.
The number of children per node $n$ is determined for each tree by the number of blocks $s$ and the depth $L_\T$ of the tree via $n = \lceil s^{1/L_\T} \rceil$.

\cref{fig:numex2c} depicts the speedup of the HAPOD over a classic POD for varying tree depths and block size at the leafs.
Specifically, \cref{fig:numex2c1} shows the speedup for a sequential execution of the HAPOD, while \cref{fig:numex2c2} shows the maximal speedup assuming $s$ processors
by summing the maximum sub-POD runtimes for each level, as these sub-PODs could be processed in parallel.

This test shows that (balanced) trees with smaller blocks are preferable in terms of runtime (\cref{fig:numex2c1}).
For highly parallel computations, trees with small block sizes and more levels (depth) perform better (\cref{fig:numex2c2}).
While the two-level tree with smallest block size performs worst in comparison,
the larger the individual leaf block, the more similar are the runtimes independent from tree depth.

\subsection{Reduction of a Large Kinetic Equation Model}\label{sc:boltzmann}

\begin{figure} \centering

 \begin{subfigure}[t]{.22\textwidth}
\includegraphics[width=\textwidth]{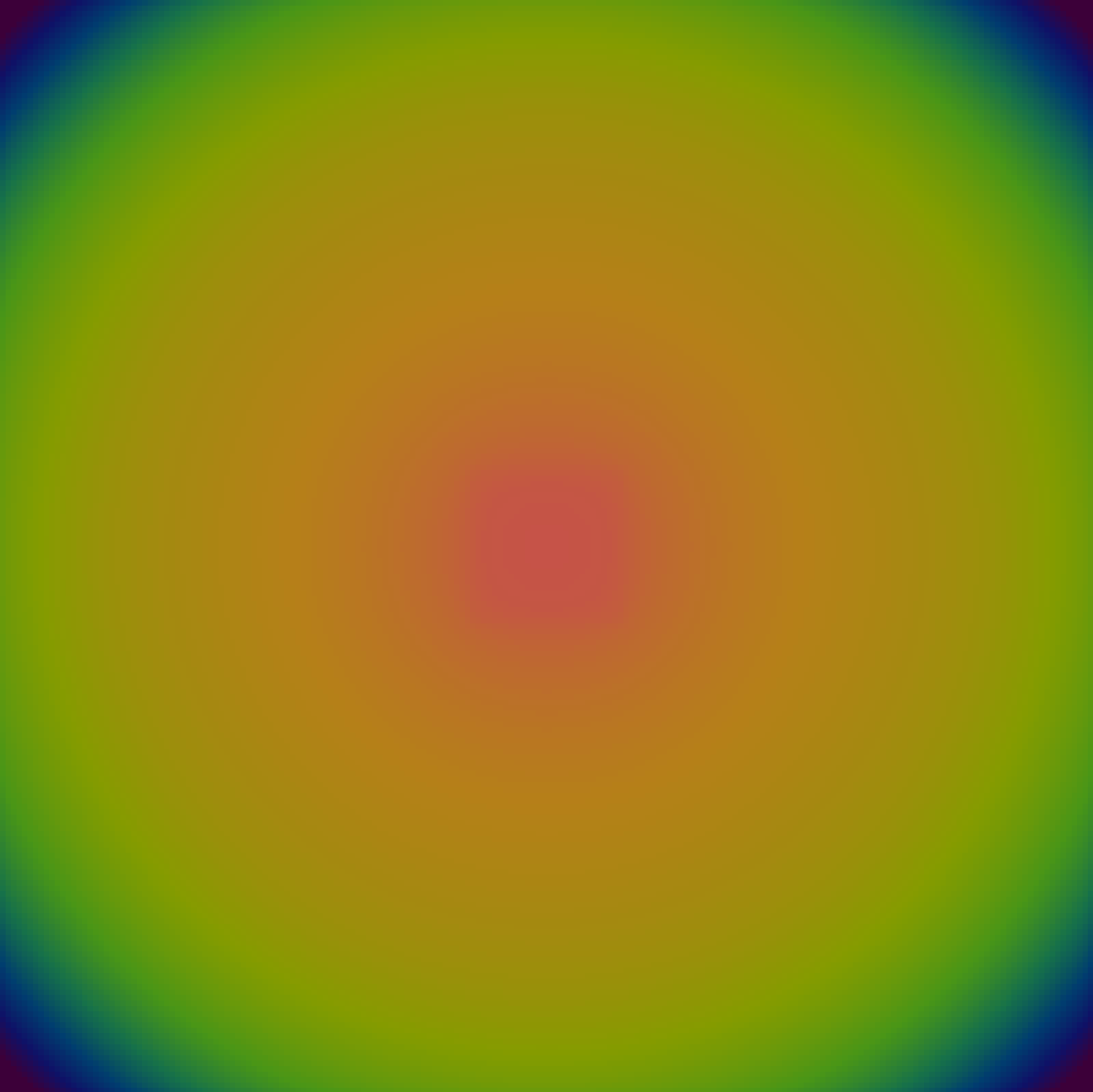}
\caption{$\mu = (0,0,0)$}
\label{fig:boltzmannsolutions1}
 \end{subfigure}
 \hspace{-0.66em}%
 ~
 \begin{subfigure}[t]{.22\textwidth}
\includegraphics[width=\textwidth]{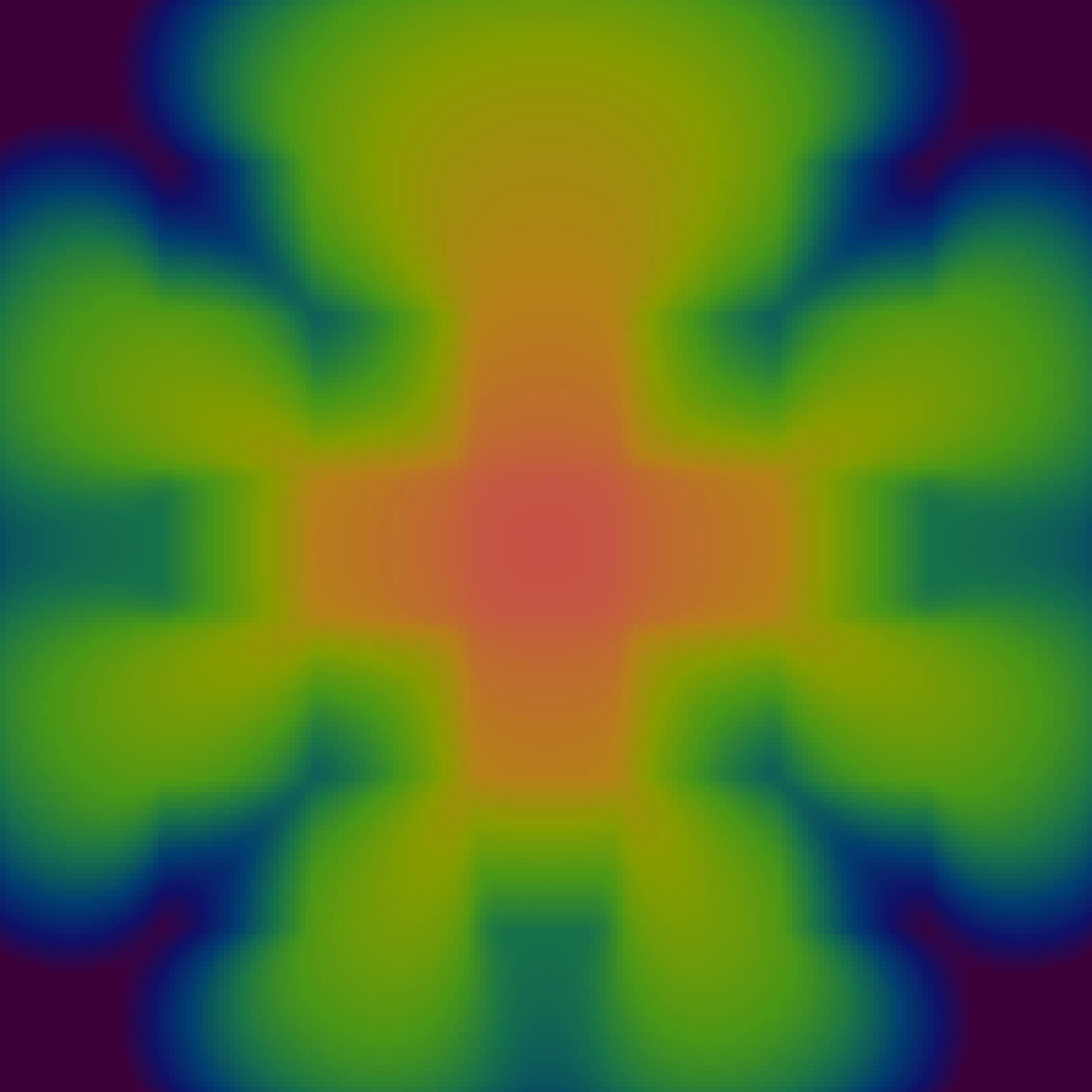}
\caption{$\mu = (0,0,6)$}
\label{fig:boltzmannsolutions2}
 \end{subfigure}
 \hspace{-0.66em}%
 ~
  \begin{subfigure}[t]{.22\textwidth}
\includegraphics[width=\textwidth]{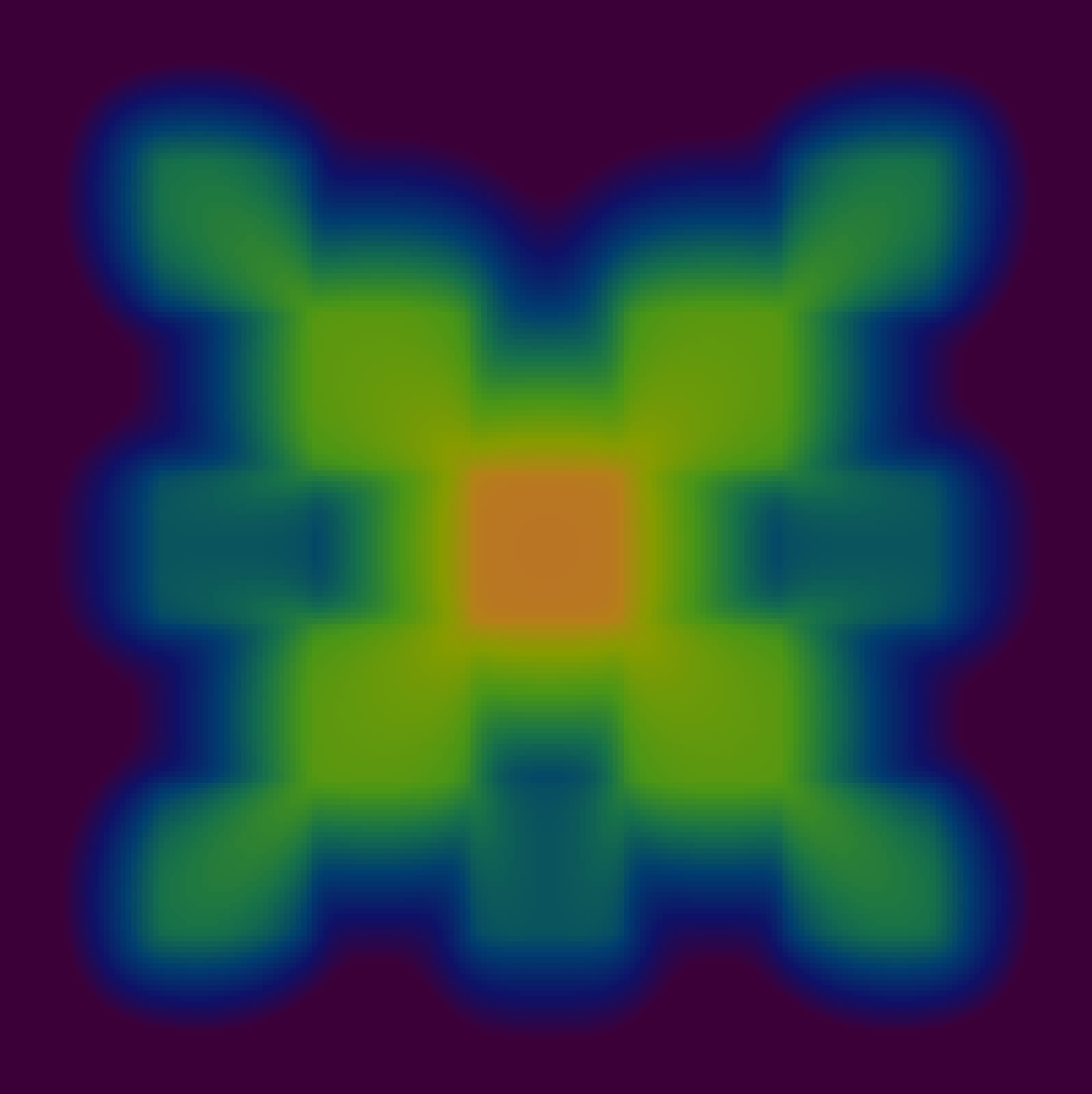}
\caption{$\mu=(2,6,0)$}
\label{fig:boltzmannsolutions3}
 \end{subfigure}
 \hspace{-0.66em}%
  ~
  \begin{subfigure}[t]{.22\textwidth}
\includegraphics[width=\textwidth]{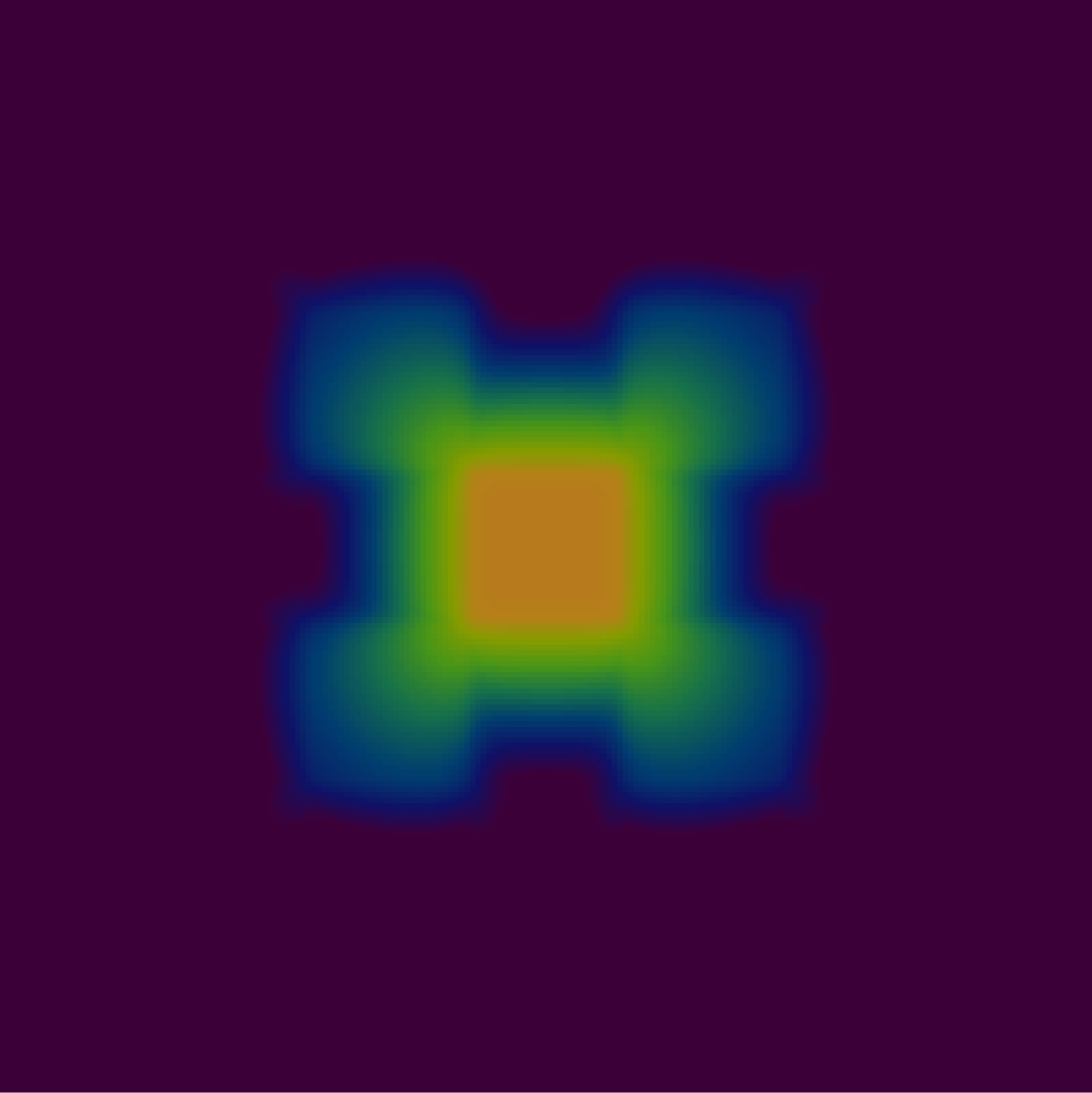}
\caption{$\mu=(8,8,4)$}
\label{fig:boltzmannsolutions4}
\end{subfigure}
 \hspace{-0.66em}%
  ~
\begin{subfigure}[t]{0.0531\textwidth}
\includegraphics[width=\textwidth]{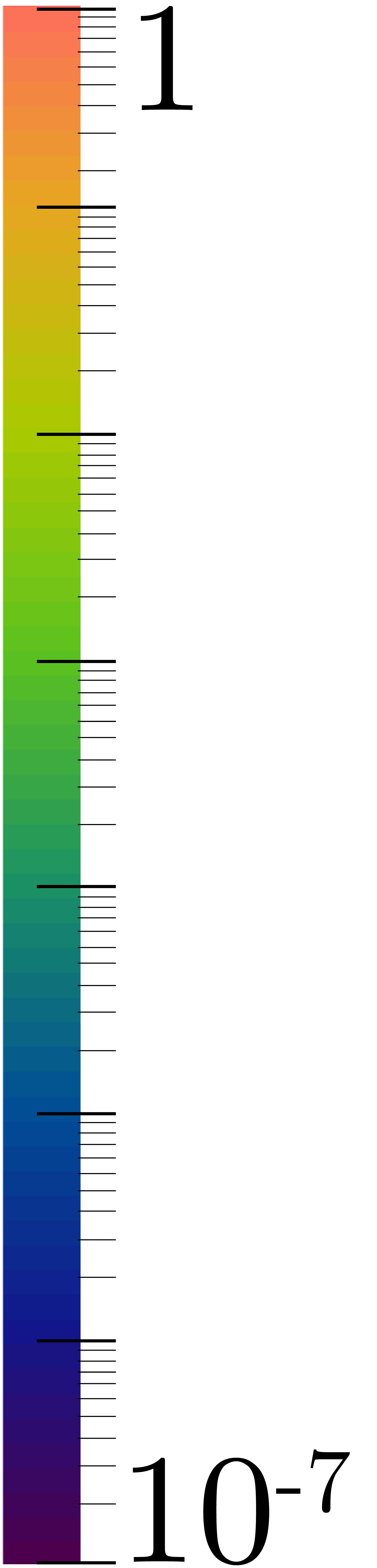}
\label{fig:boltzmannsolutions5}
 \end{subfigure}

 \caption{Solutions to the Checkerboard test case for the kinetic Boltzmann equation (cf.\ \cref{sc:boltzmann}) for different parameters
 $\mu = (\Sigma_{s,1}, \Sigma_{a,1}, \Sigma_{a,2})$. Visualized is the first component of the solution at time $T = 3.2$. The color scale is logarithmic.}
 \label{fig:boltzmannsolutions}
\end{figure}

The third numerical experiment utilizes a kinetic equation model.
In such models, the solution field does not only depend on time and space but also on velocity variables.
Hence, directly solving a kinetic equation with standard numerical methods often causes a prohibitive amount of computational cost
due to the curse of dimensionality.
Moment closure models are one approach to overcome this difficulty by transferring the kinetic equation to a hyperbolic system of coupled equations which do not depend on the velocity variable anymore (see \cite{alldredge12, brunner05, schneider14} and references therein).
This significantly reduces the effort needed to solve the problem, especially in several space dimensions.
However, the computational cost may still be too high to solve a parameter-dependent problem for a large set of parameters in a reasonable amount of time.
In this case, a POD-based state-space Galerkin projection similar to \cref{eq:gramian_reduced} can be used to further reduce the model.

Our experiment is based on the checkerboard test case for the $P_{15}$ moment closure approximation of the Boltzmann equation for neutron transport from \cite{brunner05}.
The model equation in two dimensions is given by:
\begin{equation*}
\partial_t \mathbf{p}(t,\mathbf{x}) + \mathbf{A}_x \partial_x \mathbf{p}(t,\mathbf{x}) + \mathbf{A}_z \partial_z \mathbf{p}(t,\mathbf{x}) = \mathbf{s}(t,\mathbf{x}) + \left(\Sigma_s(\mathbf{x})\mathbf{Q} - \Sigma_t(\mathbf{x}) \mathbf{I}\right) \mathbf{p}(t,\mathbf{x}),
\end{equation*}
where $\mathbf{p}(t, \mathbf{x}) \in \mathbb{R}^{136}$ for fixed spatial coordinates $\mathbf{x} = (x,z)$ and time $t$, $\mathbf{I}$ is the identity matrix and $\mathbf{Q}_{00} = 1$, $\mathbf{Q}_{ij} = 0$ otherwise.
The positive coefficients $\Sigma_s$ and $\Sigma_t = \Sigma_s + \Sigma_a$ describe scattering and total cross section, respectively, and $\mathbf{s}$ is a particle source.
The matrices $\mathbf{A}_x$, $\mathbf{A}_z \in \mathbb{R}^{136\times136}$ which describe the coupling between the moments are sparse with at most four and two entries per row, respectively.
See \cite[Eq.~8,~9]{brunner05} for the detailed definitions of the matrices.

The test case assumes a spatial domain $[0,7]\times[0,7]$ that is divided in $49$ axis-parallel cubes with unit edge width and composed of two different materials (see \cref{fig:checkerboard}) that are characterized by their scattering and absorption cross-section $\Sigma_s$ and $\Sigma_a$, respectively.
Initially, there are no neutrons in the domain.
At time $t=0$, a neutron source $\mathbf{s} = (1, 0, \ldots, 0)^\intercal$ is turned on in the center region.

The parameter dependence for the scattering and absorption cross-sections $\Sigma_{s,1}$ and $\Sigma_{a,1}$ for the first material (red regions in \cref{fig:checkerboard}) and the absorption cross-section $\Sigma_{a,2}$ for the second material (black regions in \cref{fig:checkerboard}) is to be retained for the reduced order model,
while the scattering cross-section of the second material is fixed to $\Sigma_{s,2} = 0$.
The three parameters $\Sigma_{s,1}$, $\Sigma_{a,1}$, $\Sigma_{a,2}$ are each chosen in the range $[0,8]$.
For the POD, each parameter is uniformly sampled by the five values $\{0, 2, 4, 6, 8\}$ such that $125$ solution trajectories have to be calculated. 

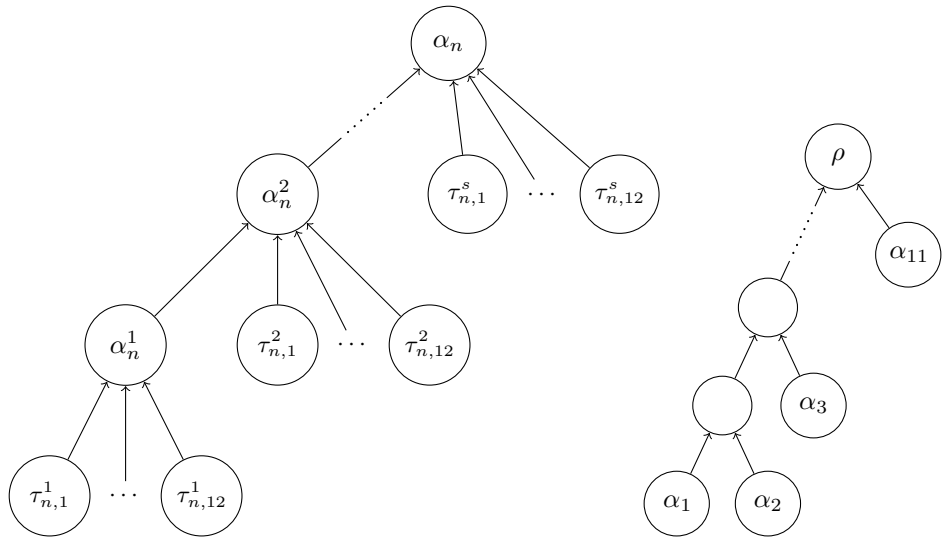
\begin{figure}\centering
 \tikzset{
	->-/.style={
 	decoration={markings,
	            mark=at position 0.5 with {\draw [white, ultra thick, -] (-0.4,0) -- (0.4,0);
			               	       \draw [line cap=round, dash pattern=on 0pt off 4\pgflinewidth, thick, -] (-0.3,0) -- (0.3,0);}},
	postaction={decorate}
        }
 }

 \begin{subfigure}[t]{.66\textwidth}
 \begin{tikzpicture}
  [level distance=20mm,
   thin,
   every node/.style={text width=0.65cm, align=center},
   level 1/.style={sibling distance=5mm},
   level 2/.style={sibling distance=5mm},
   level 3/.style={sibling distance=5mm},
   edge from parent/.style={draw,>-,>=to reversed}
  ]
  \node[draw, circle] {$\alpha_n$}
          child { node [circle, draw] {$\alpha_n^2$} edge from parent[->-,draw,>=to reversed]
		  child { node [circle, draw] {$\alpha_n^1$}    
			  child {node [circle, draw] {\small $\tau_{n, 1}^1$}}
			  child {node {\small \phantom{$\cdots$}} edge from parent[draw=none]}
                          child {node {\small $\cdots$}}
			  child {node {\small \phantom{$\cdots$}} edge from parent[draw=none]}
			  child {node [circle, draw] {\small $\tau_{n, 12}^{1}$}}}
	          child {node {\small \phantom{$\cdots$}} edge from parent[draw=none] }
	          child {node {\small \phantom{$\cdots$}} edge from parent[draw=none] }
	          child {node {\small \phantom{$\cdots$}} edge from parent[draw=none] }
	          child {node [circle, draw] {\small $\tau_{n, 1}^2$}}
	          child {node {\small \phantom{$\cdots$}} edge from parent[draw=none]}
                  child {node {\small $\cdots$}}
		  child {node {\small \phantom{$\cdots$}} edge from parent[draw=none]}
	          child {node [circle, draw] {\small $\tau_{n, 12}^2$}}}
          child {node {\small \phantom{$\cdots$}} edge from parent[draw=none] }
          child {node {\small \phantom{$\cdots$}} edge from parent[draw=none] }
          child {node {\small \phantom{$\cdots$}} edge from parent[draw=none] }
          child {node {\small \phantom{$\cdots$}} edge from parent[draw=none] }
	  child {node [circle, draw] {\small $\tau_{n, 1}^{s}$}}
          child {node {\small \phantom{$\cdots$}} edge from parent[draw=none]}
	  child {node {\small $\cdots$}}
          child {node {\small \phantom{$\cdots$}} edge from parent[draw=none]}
	  child {node [circle, draw] {\small $\tau_{n, 12}^{s}$}}
    ;
\end{tikzpicture}\caption{HAPOD on compute node $n$. The time steps are split into $s$ slices ($s = \lceil (2n_t + 1)/l \rceil$).
                          Concurrently, each of the 12 processor cores calculates one chunk at a time, performs a POD and sends the
		          resulting modes to the main MPI rank on the processor.
		          $\tau_{n,c}^t$: $t$-th time slice on core $c$.}%
\label{subfig:HAPODonNode}
 \end{subfigure}
 ~
  \begin{subfigure}[t]{.30\textwidth}
\begin{tikzpicture}
  [thin,
   every node/.style={text width=0.51cm, align=center},
   level 1/.style={sibling distance=18.462mm, level distance=20mm,},
   level 2/.style={sibling distance=12mm, level distance=13mm},
   edge from parent/.style={draw,>-,>=to reversed},
  ]
  \node [draw, circle] {$\rho$}
	    child {node [draw, circle] {} edge from parent[->-,draw,>=to reversed]
		   child  {node [draw, circle] {} 
		           child  {node [draw, circle] {$\alpha_1$}}
	                   child  {node [draw, circle] {$\alpha_2$}}}
		   child  {node [draw, circle] {$\alpha_3$}}}
	    child {node [draw, circle, yshift=7mm]{$\alpha_{11}$}};
    \end{tikzpicture}\caption{An incremental \mbox{HAPOD} (cf.\ \cref{sc:special_cases}) is performed on MPI rank 0 with the modes collected on each node.
                          $\alpha_n$: modes from node $n$.}
\label{subfig:HAPODoverNodes}
\end{subfigure}
\caption{HAPOD tree used for kinetic Boltzmann example (cf.\ \cref{sc:boltzmann}) on 11 compute nodes with 12 cores each.}
 \label{fig:boltzmannhapod}
\end{figure}

The model is solved by a finite volume solver for systems of hyperbolic equations implemented in dune-gdt \cite{leibner15, dunegdt},
using a numerical Lax-Friedrichs flux and an explicit Euler fractional step time stepping scheme (see \cite[Ch.~17.1]{leveque02}) to incorporate the right-hand side into the solution.
Solutions for some exemplary parameter choices are visualized in \cref{fig:boltzmannsolutions}.

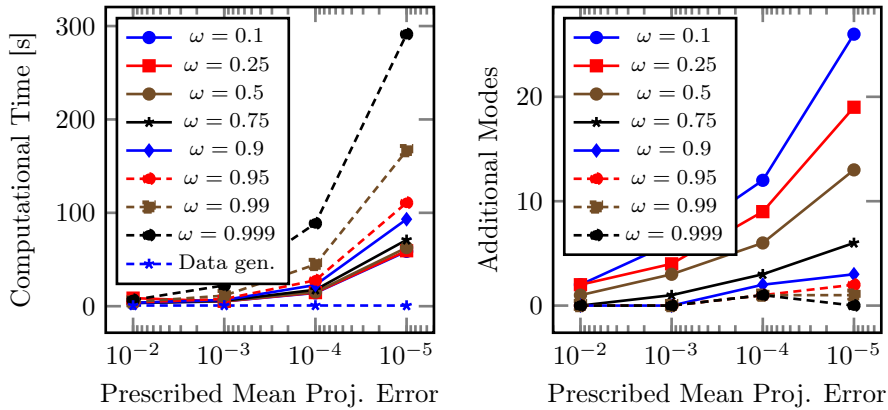
\begin{figure} \centering

 \begin{subfigure}[t]{.48\textwidth}

\begin{tikzpicture}
  \begin{semilogxaxis}
    [legend style={legend pos=north west,font=\footnotesize},
     max space between ticks=70cm,
     line width=1pt,
     major tick length=2mm,
     every axis plot/.style={line width=1pt},
     every tick/.append style={line width=1pt},
     mark size=2pt,
     x dir=reverse,
     xlabel={Prescribed Mean Proj. Error},
     ylabel={Computational Time [s]}]
    \addplot+ [mark options={}] table[x index=0, y index=2] {time_gridsize_20_initialtol_0.01_chunk_10_omega_0.1.dat};
    \addplot+ [mark options={}] table[x index=0, y index=2] {time_gridsize_20_initialtol_0.01_chunk_10_omega_0.25.dat};
    \addplot+ [mark options={}] table[x index=0, y index=2] {time_gridsize_20_initialtol_0.01_chunk_10_omega_0.5.dat};
    \addplot+ [mark options={}] table[x index=0, y index=2] {time_gridsize_20_initialtol_0.01_chunk_10_omega_0.75.dat};
    \addplot+ [mark options={}] table[x index=0, y index=2] {time_gridsize_20_initialtol_0.01_chunk_10_omega_0.9.dat};
    \addplot+ [mark options={}] table[x index=0, y index=2] {time_gridsize_20_initialtol_0.01_chunk_10_omega_0.95.dat};
    \addplot+ [mark options={}] table[x index=0, y index=2] {time_gridsize_20_initialtol_0.01_chunk_10_omega_0.99.dat};
    \addplot+ [mark options={}] table[x index=0, y index=2] {time_gridsize_20_initialtol_0.01_chunk_10_omega_0.999.dat};
    \addplot+ [mark options={}] table[x index=0, y index=3] {time_gridsize_20_initialtol_0.01_chunk_10_omega_0.999.dat};
    \legend{$\omega = 0.1$, $\omega = 0.25$, $\omega = 0.5$, $\omega = 0.75$, $\omega = 0.9$, $\omega = 0.95$, $\omega = 0.99$, $\omega = 0.999$, Data gen.}
  \end{semilogxaxis}
\end{tikzpicture}

  \caption{HAPOD execution wall time for different values of $\omega$.
  For all values of $\omega$, the HAPOD is much faster than the POD which took about 1600 seconds for each prescribed tolerance $\varepsilon^*$.
  Snapshot generation (Data gen.) took 0.8 seconds.}
  \label{subfig:diffomega2}
 \end{subfigure}
 ~
  \begin{subfigure}[t]{.48\textwidth}

\begin{tikzpicture}
  \begin{semilogxaxis}
    [legend style={legend pos=north west,font=\footnotesize},
     max space between ticks=70cm,
     line width=1pt,
     major tick length=2mm,
     every axis plot/.style={line width=1pt},
     every tick/.append style={line width=1pt},
     mark size=2pt,
     x dir=reverse,
     xlabel={Prescribed Mean Proj. Error},
     ylabel={Additional Modes}]
    \addplot+ [mark options={}] table[x index=0, y index=4] {num_modes_gridsize_20_initialtol_0.01_chunk_10_omega_0.1.dat};
    \addplot+ [mark options={}] table[x index=0, y index=4] {num_modes_gridsize_20_initialtol_0.01_chunk_10_omega_0.25.dat};
    \addplot+ [mark options={}] table[x index=0, y index=4] {num_modes_gridsize_20_initialtol_0.01_chunk_10_omega_0.5.dat};
    \addplot+ [mark options={}] table[x index=0, y index=4] {num_modes_gridsize_20_initialtol_0.01_chunk_10_omega_0.75.dat};
    \addplot+ [mark options={}] table[x index=0, y index=4] {num_modes_gridsize_20_initialtol_0.01_chunk_10_omega_0.9.dat};
    \addplot+ [mark options={}] table[x index=0, y index=4] {num_modes_gridsize_20_initialtol_0.01_chunk_10_omega_0.95.dat};
    \addplot+ [mark options={}] table[x index=0, y index=4] {num_modes_gridsize_20_initialtol_0.01_chunk_10_omega_0.99.dat};
    \addplot+ [mark options={}] table[x index=0, y index=4] {num_modes_gridsize_20_initialtol_0.01_chunk_10_omega_0.999.dat};
    \legend{$\omega = 0.1$, $\omega = 0.25$, $\omega = 0.5$, $\omega = 0.75$, $\omega = 0.9$, $\omega = 0.95$, $\omega = 0.99$, $\omega = 0.999$}
  \end{semilogxaxis}
\end{tikzpicture}

  \caption{Number of additional HAPOD modes (compared to POD) for different values of $\omega$.
  The POD resulted in $2$, $10$, $35$ and $94$ modes for a prescribed error $\varepsilon^*$ of $10^{-2}$, $10^{-3}$, $10^{-4}$ and $10^{-5}$, respectively.}
  \label{subfig:diffomega1}
 \end{subfigure}

 \caption{Influence of $\omega$ on HAPOD execution wall time and number of resulting modes for the kinetic Boltzmann equation example (cf.\ \cref{sc:boltzmann}) on a grid with $k^2 = 400$ elements ($N=54400$ degrees of freedom).}
 \label{fig:diffomega}
\end{figure}

As the $P_{15}$ model consists of $136$ coupled equations with $136$ unknowns and the finite volume scheme uses a uniform cube grid with $k^2$ elements,
the discrete solution vector for the finite volume discretization at a fixed time contains $N = 136 k^2$ entries.
The test case is solved up to a time of $T = 3.2$ and the time step length is determined by a Courant–Friedrichs–Lewy number of $0.4$ which leads to  $n_t = \left\lceil \frac{T}{7/k\cdot 0.4} \right\rceil$ time steps per trajectory.
To obtain an accurate reduced order model, the intermediate steps in the fractional step discretization have to be included into the snapshot set as well, such that $2n_t$ discrete solution vectors have to be stored per trajectory.
Thus, a total of approximately $250 n_t$ snapshots has to be handled.
This corresponds to roughly $250 \cdot \frac{T}{7/k\cdot 0.4} \cdot 136 k^2 \approx 39000 k^3$ double precision floating point numbers that have to be stored in memory.
For a grid with $k=40$, these would take about $20$ gigabytes of memory whereas for $k=200$ about $2.5$ terabytes of memory were needed. 

\begin{figure}[t] \centering

 \begin{subfigure}[t]{.48\textwidth}
 \begin{tikzpicture}
  \begin{semilogxaxis}
    [legend style={legend pos=north west,font=\footnotesize},
     max space between ticks=70cm,
     line width=1pt,
     major tick length=2mm,
     every axis plot/.style={line width=1pt},
     every tick/.append style={line width=1pt},
     mark size=2pt,
     x dir=reverse,
     xlabel={Prescribed Mean Proj. Error},
     ylabel={Max.\ Intermed.\ Modes}]
    \addplot+ [mark options={}] table[x index=0, y index=3] {num_modes_gridsize_20_initialtol_0.01_chunk_10_omega_0.1.dat};
    \addplot+ [mark options={}] table[x index=0, y index=3] {num_modes_gridsize_20_initialtol_0.01_chunk_10_omega_0.25.dat};
    \addplot+ [mark options={}] table[x index=0, y index=3] {num_modes_gridsize_20_initialtol_0.01_chunk_10_omega_0.5.dat};
    \addplot+ [mark options={}] table[x index=0, y index=3] {num_modes_gridsize_20_initialtol_0.01_chunk_10_omega_0.75.dat};
    \addplot+ [mark options={}] table[x index=0, y index=3] {num_modes_gridsize_20_initialtol_0.01_chunk_10_omega_0.9.dat};
    \addplot+ [mark options={}] table[x index=0, y index=3] {num_modes_gridsize_20_initialtol_0.01_chunk_10_omega_0.95.dat};
    \addplot+ [mark options={}] table[x index=0, y index=3] {num_modes_gridsize_20_initialtol_0.01_chunk_10_omega_0.99.dat};
    \addplot+ [mark options={}] table[x index=0, y index=3] {num_modes_gridsize_20_initialtol_0.01_chunk_10_omega_0.999.dat};
    \legend{$\omega = 0.1$, $\omega = 0.25$, $\omega = 0.5$, $\omega = 0.75$, $\omega = 0.9$, $\omega = 0.95$, $\omega = 0.99$, $\omega = 0.999$}
  \end{semilogxaxis}
\end{tikzpicture}
  \caption{Maximal number of intermediate \mbox{HAPOD} modes for different values of $\omega$.}
  \label{subfig:diffomega3}
 \end{subfigure}
 ~
  \begin{subfigure}[t]{.48\textwidth}
  
  \begin{tikzpicture}
   \begin{loglogaxis}
     [legend style={legend pos=north east,font=\footnotesize},
      max space between ticks=70cm,
      line width=1pt,
      major tick length=2mm,
      every axis plot/.style={line width=1pt},
      every tick/.append style={line width=1pt},
      mark size=2pt,
      x dir=reverse,
      xlabel={Prescribed Mean Proj. Error},
      ylabel={Mean Model Reduction Error}]
     \addplot+ [mark options={}] table[x index=0, y index=9] {l2_mean_errs_gridsize_20_chunk_10_omega_0.1.dat};
     \addplot+ [mark options={}] table[x index=0, y index=9] {l2_mean_errs_gridsize_20_chunk_10_omega_0.25.dat};
     \addplot+ [mark options={}] table[x index=0, y index=9] {l2_mean_errs_gridsize_20_chunk_10_omega_0.5.dat};
     \pgfplotsset{cycle list shift=1}
     \addplot+ [mark options={}] table[x index=0, y index=9] {l2_mean_errs_gridsize_20_chunk_10_omega_0.9.dat};
     \pgfplotsset{cycle list shift=3}
     \addplot+ [mark options={}] table[x index=0, y index=9] {l2_mean_errs_gridsize_20_chunk_10_omega_0.999.dat};
     \addplot+ [mark options={}, mark=x] table[x index=0, y index=3] {l2_mean_errs_gridsize_20_chunk_10_omega_0.95.dat};
     \legend{$\omega = 0.1$, $\omega = 0.25$, $\omega = 0.5$, $\omega = 0.9$, $\omega = 0.999$, POD}
   \end{loglogaxis}
  \end{tikzpicture}

   \caption{$\ell^2$-mean model reduction errors for 1250 random parameters, $k = 20$.}
   \label{subfig:numex3c}
  \end{subfigure}

 \caption{Number of local HAPOD modes and model reduction errors for the kinetic Boltzmann equation example (cf.\ \cref{sc:boltzmann}).}
 \label{fig:numex3s2}
\end{figure}
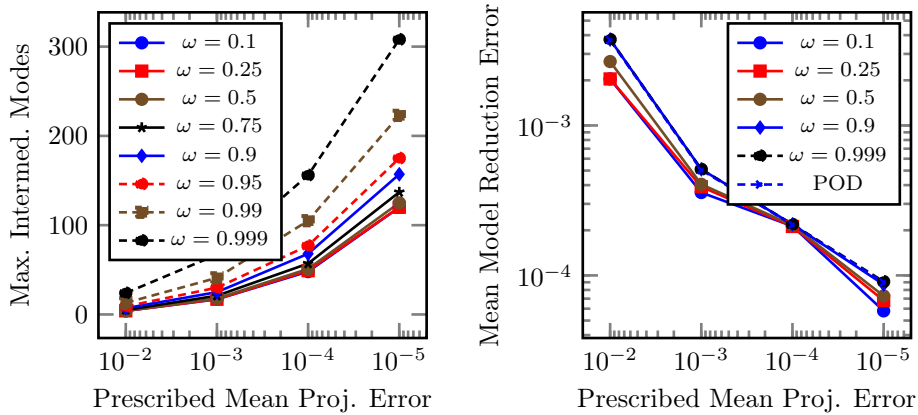

The numerical experiments are performed on eleven compute nodes of a distributed memory computer cluster\footnote{Each node encloses two Intel Xeon Westmere X5650 CPUs ($2 \times 6$ cores) and $48$GB RAM.} utilizing $125$ processor cores.
In the case of the classical POD, each processor core calculates a solution trajectory for one parameter of the sample parameter set, after which the resulting discrete solution vectors are gathered on a single node where the POD is performed.
For the HAPOD, the local PODs are calculated in parallel whenever possible. On each core a chunk of $l = 10$ time steps is calculated at a time,
a POD is performed with this chunk per core and the remaining modes are gathered per node and another POD is computed.
Subsequently, the next solution chunk is calculated and compressed by a POD on each core. The resulting modes together with the modes from the first POD on node level serve as input to a second POD on node level.
This is repeated until all time steps are calculated (cf.\ \cref{subfig:HAPODonNode}).
The result is a set of modes on each node.
Instead of gathering all modes on the main node at once, which would exceed the main node's memory, the modes are sequentially sent to the main node where additional PODs for each node are performed (cf.\ \cref{subfig:HAPODoverNodes}).
The underlying POD algorithm is provided by \texttt{pyMOR} \cite{milk-rave-schindler2016,pymor}, which is also used to compute and solve the
resulting reduced order model. We use an optimized, incremental variant of the POD algorithm in \cref{def:method_of_snapshots}, which exploits
the block structure of the Gramian with the diagonal blocks being given by diagonal matrices containing the singular values
of the PODs performed at the child nodes.
For $k=60$, $\omega=0.95$ and $\epsilon^\ast = 10^{-4}$, this improved the overall HAPOD computation time compared to the unoptimized algorithm by 7.4\% from 457 to 423 seconds.

In \cref{fig:diffomega}, computational time and number of HAPOD modes for different values of $\omega$ (see \cref{cor:hapod_bound}) are plotted against the prescribed $\ell^2$-mean error tolerance.
A $20\times20$ grid was used ($k = 20$, $N=54400$).
With decreasing $\omega$, the computational time for the HAPOD reduces but the number of final modes required to satisfy the error bound increases.
Thus, choosing a larger value of $\omega$ means trading some time spent in the HAPOD for a more efficient reduced model.

Computing the classical POD takes about 1600 seconds for each tolerance.
As for the previous numerical examples,
the HAPOD is notably faster than the POD for all tested tolerances (see \cref{subfig:diffomega2}).
Note that the HAPOD is about five times as fast as the POD, even for $\omega = 0.999$ where at most one additional final mode is obtained.
The snapshot generation, i.e.\ the solution of the high-dimensional problem, takes only a few seconds for this grid size, so the overall
computational time is dominated by the POD computation.

The maximal number of intermediate modes increases with $\omega$ (see \cref{subfig:diffomega3}).
This may be important in terms of memory usage, especially if the intermediate modes are gathered in one node's memory at some time during the HAPOD.
A smaller value of $\omega$ may thus be preferable if a shortage of memory is expected.
Choosing $\omega = 0.95$, the number of final HAPOD modes is only slightly higher than the number of POD modes (at most two additional modes are needed), while the computation is, depending on the tolerance, at least one order of magnitude faster.

To get a measure for the model reduction error, the reduced model was solved for 1250 random combinations of $\Sigma_{s,1}$, $\Sigma_{a,1}$, $\Sigma_{a,2} \in [0,8]$ and compared to the high-dimensional solution.
For $\omega$ close to one, the resulting $\ell^2$-mean error is almost equal for POD and HAPOD (see
\cref{subfig:numex3c}). For small values of $\omega$, the model reduction error decreases slightly due to the larger
number of HAPOD modes, which here result in slightly better approximation spaces than backed by theory. Solving the reduced model takes about $5\cdot 10^{-2}$ seconds independent of the grid size and is thus considerably faster than solving the full model which takes up to $500$ seconds on a $200\times200$ grid.

\begin{figure}[t] \centering
 \begin{subfigure}[t]{.48\textwidth}
    \begin{center}
		  \raisebox{6.5mm}{\includegraphics[width=.74\textwidth]{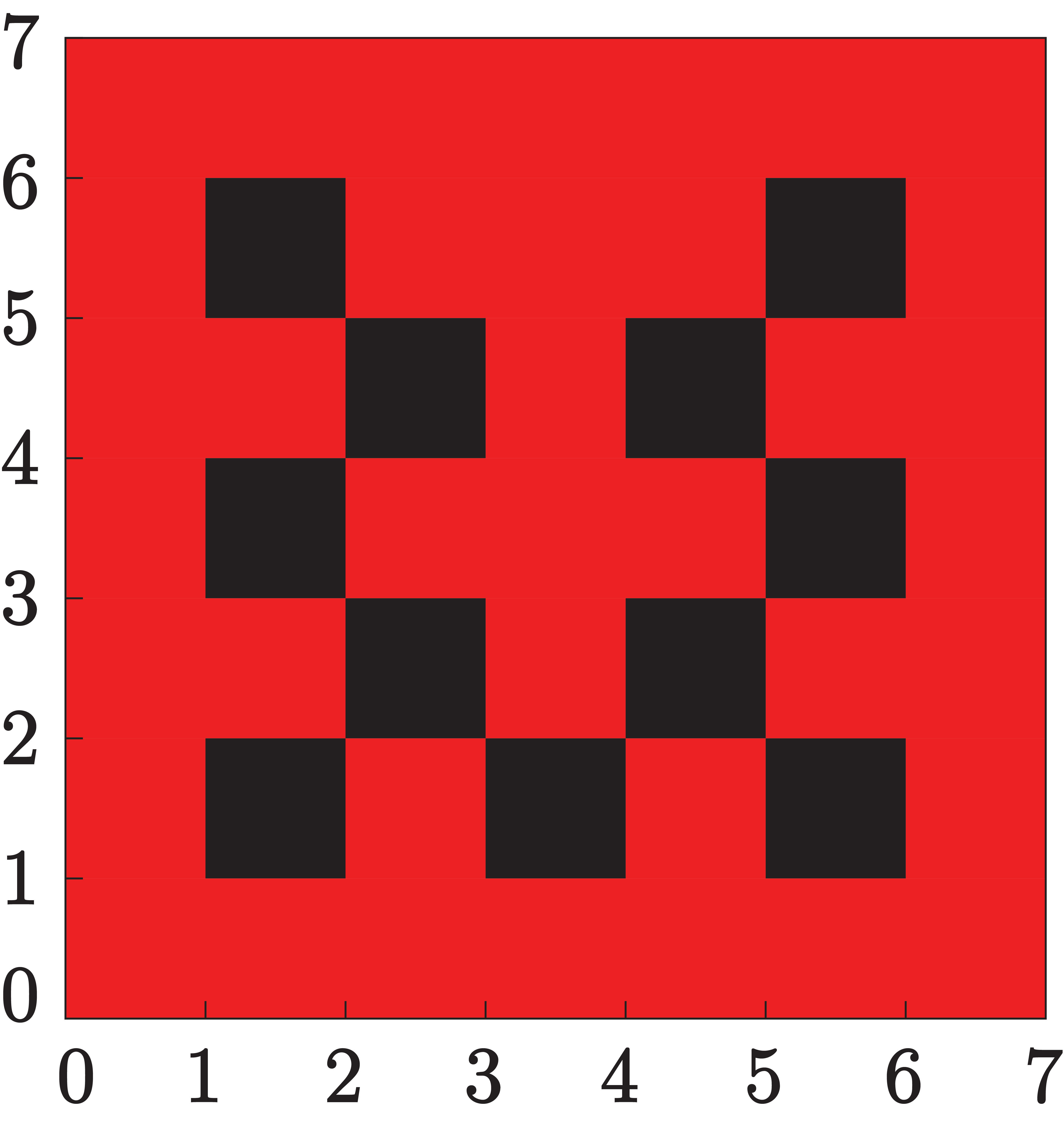}}
	\end{center}%
    \vspace*{-0.355cm}
   \caption{Computational domain: red and black regions represent common materials.}
   \label{fig:checkerboard}
 \end{subfigure}
 ~
  \begin{subfigure}[t]{.48\textwidth}
  \begin{tikzpicture}
  \begin{semilogyaxis}
    [legend pos=south east,
     max space between ticks=50cm,
     line width=1pt,
     major tick length=2mm,
     every axis plot/.style={line width=1pt},
     every tick/.append style={line width=1pt},
     mark size=2pt,
     xlabel={Grid Size $k$},
     ylabel={Computational Time [s]}]
    \addplot+ [mark options={}] table[x index=0, y index=1] {time_tol_1e-4_chunk_10_omega_0.95.dat};
    \addplot+ table[x index=0, y index=2] {time_tol_1e-4_chunk_10_omega_0.95.dat};
    \addplot+ [mark options={}] table[x index=0, y index=3] {time_tol_1e-4_chunk_10_omega_0.95.dat};
    \legend{POD, HAPOD, Data gen.}
  \end{semilogyaxis}
\end{tikzpicture}
  \caption{Computational wall time for POD and \mbox{HAPOD} ($\varepsilon^\ast = 10^{-4}$, $\omega = 0.95$).
}
  \label{subfig:modred}
 \end{subfigure}

 \caption{Computational domain and required time for the kinetic Boltzmann equation example (cf.\ \cref{sc:boltzmann}).}
\end{figure}

The previous tests were performed on a coarse $20 \times 20$ grid.
Since the memory consumption scales with $k^3$, refining the grid quickly leads to a situation where the snapshots do not fit in memory simultaneously such that a classical POD cannot be performed without access to mass storage.
In \cref{subfig:modred}, a performance comparison between POD and HAPOD ($\omega = 0.95$) for different grid sizes can be found.
The HAPOD is up to two orders of magnitude faster than the POD for the coarse grids where the POD is still feasible.
For $k \geq 60$, the POD fails to run due to memory limitations while the HAPOD does not have this problem.
Note that the HAPOD is twice as fast on the $200\times200$ grid than the classical POD on a $40 \times 40$ grid even though the amount of data that needs to be processed increases by a factor of $125$ between $k=40$ and $k=200$.
The time used for data generation plays a negligible role in the algorithm.
Creating the snapshots for POD and HAPOD takes less than 10 seconds for $k=40$ and about 500 seconds for $k=200$.
Using the HAPOD thus directly translates into a much faster overall reduced basis generation.

The final incremental PODs performed to collect the outputs of the individual compute nodes (\cref{subfig:HAPODoverNodes}) are not optimal in terms of parallelism as all calculations are done on the main node. We thus tested another tree topology where a binary tree of nodes is built. 
Indeed this improved computational wall times of the HAPOD again, e.g.\ from 423 to 239 seconds (43\% reduction) for $k=60$, $\omega=0.95$, $\epsilon^\ast = 10^{-4}$, while the memory requirements and the quality
of the resulting HAPOD space were comparable.

\section{Conclusion}\label{sc:conc}
With the HAPOD, this work introduces a general scheme for approximate POD
computation that allows to distribute the computational workload among arbitrary trees
of workers, making it easily adaptable to different computing environments.
Rigorous error and mode bounds are proven that ascertain the reliability
and performance of the method. Specialized variants for incremental and distributed
HAPOD computation are discussed, and numerical experiments underscore the
applicability of the HAPOD, from small embedded devices to
high performance computer clusters.

\section*{Code Availability}
The source code used to compute the presented results is available under open source licenses
and is included in the supplementary material to this publication.

\bibliographystyle{plain}
\bibliography{main}

\end{document}